\begin{document}
\newtheorem{theoreme}{Theorem}
\newtheorem{ex}{Example}
\newtheorem{definition}{Definition}
\newtheorem{lemme}{Lemma}
\newtheorem{remarque}{Remark}
\newtheorem{exemple}{Example}
\newtheorem{proposition}{Proposition}
\newtheorem{corolaire}{Corollary}
\newtheorem{hyp}{Hypothesis}
\newtheorem*{rec}{Recurrence Hypothesis}
\newcommand\bel{>}
\newcommand\N{\mathbb{N}}
\newcommand\Z{\mathbb{Z}}
\newcommand\R{\mathbb{R}}
\newcommand\C{\mathbb{C}}
\newcommand\Sp{\mathbb{S}}
\newcommand\hp{\mathcal{T}^{d-1}X}
\newcommand\Tr{\mathrm{Tr}}
\newcommand\spt{\mathrm{supp}}
\title{The semi-classical scattering matrix from the point of view of Gaussian states}
\author{Maxime Ingremeau}
\affil{Laboratoire J.A. Dieudonné, Université de Nice - Sophia Antipolis}
\date{}

\maketitle

\begin{abstract}
In this paper, we will consider semiclassical scattering by compactly supported non-trapping potential on $\R^d$. We will define a family of Gaussian states on $\Sp^{d-1}$, parametrized by points in $T^*\Sp^{d-1}$, and show that the action of the scattering matrix on a Gaussian state of parameter $\rho\in T^*\Sp^{d-1}$ is still a Gaussian state, with parameter $\kappa(\rho)$, where $\kappa$ is the (classical) scattering map. This is one way of saying that \emph{the scattering matrix quantizes the scattering map}, complementary to the one introduced in \cite{Alex} in terms of Fourier Integral Operators.
\end{abstract}

\section{Introduction}
\subsection{The scattering matrix}
Consider a Schrödinger operator of the form  $P_h:= -\frac{h^2}{2} \Delta +V$ with $V\in C^\infty_c(\R^d)$. Here, $h$ is a semi-classical parameter, which will go to zero in the sequel.

It is well-known (see for instance \cite[Chapter 2]{Mel} or \cite[Chapter 3]{Resonances}) that for any $\phi_{in}\in
C^\infty (\mathbb{S}^{d-1})$, and any fixed $h>0$, there is a
unique solution to $\big{(}P_h-\frac{1}{2}\big{)}u=0$ satisfying, for all $x\in \R^d$:
\begin{equation}\label{defscattering}
u(x)=|x|^{-(d-1)/2}\big{(} e^{-i |x|/h} \phi_{in}(-\hat{x}) +
e^{i |x|/h} \phi_{out}(\hat{x}) \big{)} + O(|x|^{-(d+1)/2}),
\end{equation}
where we write $\hat{x}= \frac{x}{|x|}\in \Sp^{d-1}$.

We define the \emph{scattering matrix} $S_{h}: C^\infty (\mathbb{S}^{d-1})
\longrightarrow C^\infty (\mathbb{S}^{d-1})$, which depends on $h$, by
$$S_{h}(\phi_{in}) := e^{i\pi (d-1)/2} \phi_{out}.$$

$S_h$ may then be extended by density to an operator acting on $L^2(\Sp^{d-1})$.
The factor $e^{i\pi(d-1)/2}$ is taken so that the scattering matrix is the identity operator when $V\equiv 0$.

\begin{remarque}
The definition of the scattering matrix we took here is not exactly the standard one, but is very close to it. If we denote by $\tilde{S}_h$ the standard definition of the scattering matrix, as can be found in \cite{Mel}, and by $T$ the operator given by $(Tf)(\hat{x}) = f(-\hat{x})$, we have
$$S_h= T \tilde{S}_h T.$$
Thanks to \cite[Theorem 3.40]{Resonances}, this equality can also be written as $S_h = \tilde{S}_{-h}^{-1}$.

In particular, $S_h$ and $\tilde{S}_h$ have the same spectrum, but our definition will be more natural in relation with propagation of Gaussian wave packets.
\end{remarque}

It can be shown that $S_h$ is a unitary operator, and that $S_h-Id$ is trace class. 
The semi-classical properties of $S_h$ are closely related to the (classical) \emph{scattering map}, which we now define.

\subsection{The scattering map}
We denote by $p(x,\xi)= \frac{|\xi|^2}{2}+V(x) : T^*\R^d\longrightarrow \mathbb{R}$ the
classical Hamiltonian, which is the principal symbol of $P_h$. Let us write $\mathcal{E}$ for the energy layer of energy 1/2:
\begin{equation}\label{layer}
\mathcal{E}=\{(x,\xi)\in T^*\R^d; ~ p(x,\xi)=1/2\}.
\end{equation}

We denote by $\Phi^t(\rho)$ the Hamiltonian flow for the Hamiltonian $p$.
We will suppose in the sequel that the Hamiltonian flow is \emph{non-trapping} on the energy level $\mathcal{E}$, in the sense that
\begin{equation}\label{nontrapping}
\forall \rho\in \mathcal{E}, \exists T>0 \text{ such that } \forall t\in \R \text{ with } |t|\geq T, \text{ we have } \pi_x(\Phi^t(\rho))\notin \spt ~V.
\end{equation}
Here, $\pi_x : T^*\R^d\rightarrow \R^d$ denotes the projection on the base variable.

Since, away from $\spt ~V$, the trajectories by $\Phi^t$ are just straight lines, we have that for any $\omega\in \mathbb{S}^{d-1}$, and $\eta\in \omega^\perp\subset \mathbb{R}^d$, there
exists a unique $\rho_{\omega,\eta}\in \mathcal{E}$ such that
\begin{equation}\label{defrho}
\pi_x\big{(}\Phi^t(\rho_{\omega,\eta})\big{)} = t\omega+\eta \text{   for   } t< - T_0,
\end{equation}
 where $T_0$ is large enough so that $\spt ~V\subset B(0,T_0)$.
Here, $\omega$ is the \emph{incoming direction}, and $\eta$ is the \emph{impact parameter}. In the sequel, we will identify
\begin{equation*}
\{(\omega,\eta); ~\omega\in \mathbb{S}^{d-1}, \eta \in \omega^\perp\} \cong T^*\mathbb{S}^{d-1}.
\end{equation*} 

Thanks to the non-trapping assumption, we have that for all $\omega\in \mathbb{S}^{d-1}$, and $\eta\in \omega^\perp\subset \mathbb{R}^d$, there exists $\omega'\in \mathbb{S}^{d-1}$, $\eta'\in (\omega')^\perp\subset \mathbb{R}^d$ and $t'\in \mathbb{R}$ such that for all $t\geq T_0$,
\begin{equation*}
\pi_x\big{(} \Phi^t(\rho_{\omega,\eta})\big{)} = \omega'(t-t')+\eta'.
\end{equation*}

The \emph{(classical) scattering map} is then defined as $\kappa(\omega,\eta)=(\omega',\eta')$, as represented on Figure \ref{relation}.

\begin{figure}
    \center
   \includegraphics[scale=0.6]{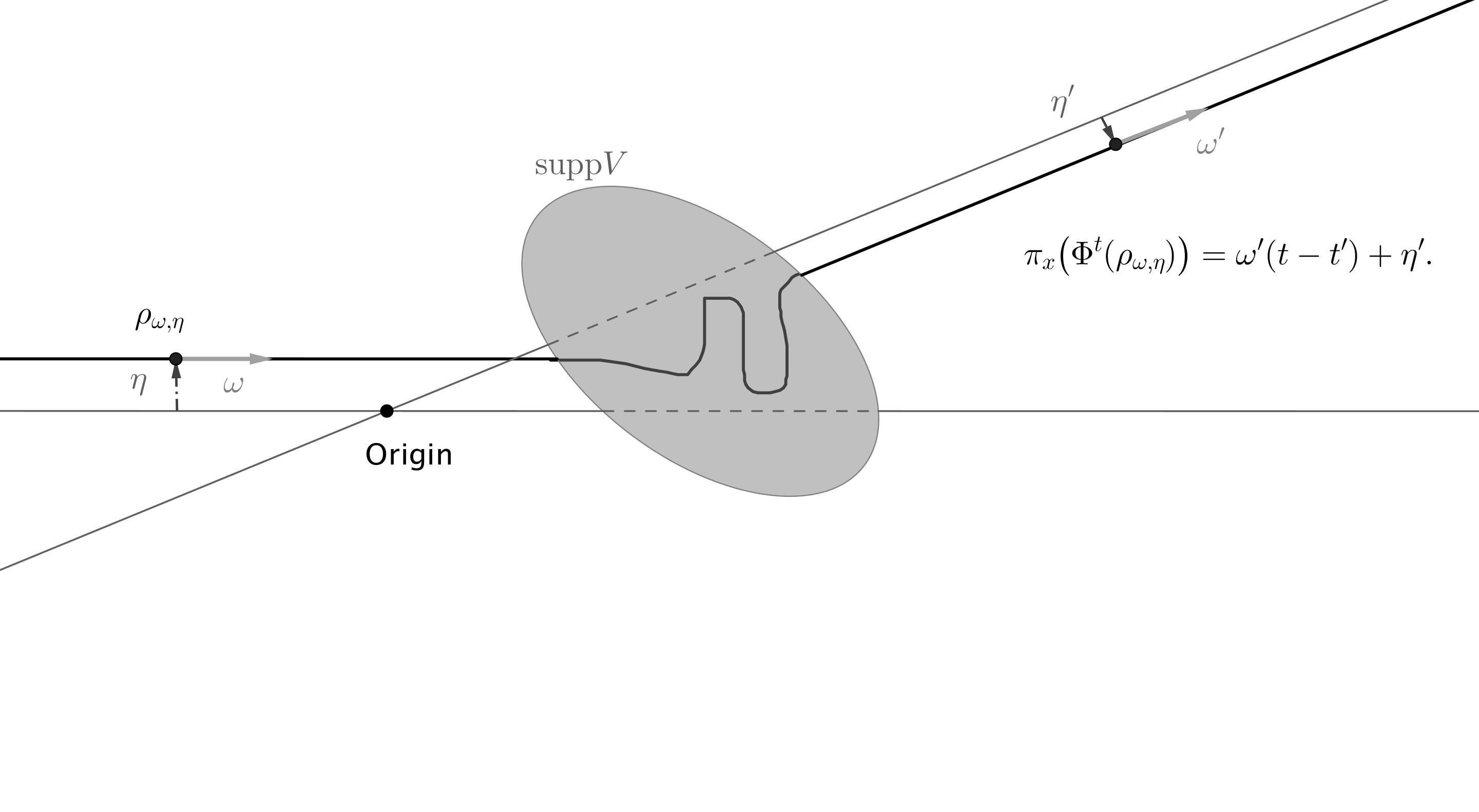}
    \caption{The scattering map $\kappa$.} \label{relation}
\end{figure}

The scattering matrix is strongly related to the scattering map in the semiclassical limit (see section \ref{Others} for some results in this direction). The aim of this paper is to express this relation in terms of \emph{Gaussian states}, which we now introduce.

\subsection{Statement of the result}
\subsubsection*{Gaussian states}
From now on, we fix a function $\chi\in C^\infty(\R; [0,1])$ such that $\chi(r) = 1$ if $r\leq 1/2$ and $\chi(r)=0$ if $r\geq 3/4$.

Let $(x_0,\xi_0) \in S^*\R^d$, let $\Gamma_0$ be a symmetric $d\times d$ matrix with positive definite real part, and $Q_0$ be a polynomial in $d$ variables. Let us write\footnote{The cut-off $\chi \Big{(} \frac{|\hat{x}-\xi|}{h^{1/3}}\Big{)}$ is here only so that the integral in (\ref{resolution2}) converges. The power $h^{-1/3}$ could be replaced by any power $h^{-\alpha}$ with $0<\alpha<1/3$.} 
$$\phi_{x_0,\xi_0,\Gamma_0,Q_0}(\hat{x};h)= \chi \Big{(} \frac{|\hat{x}-\xi|}{h^{1/3}}\Big{)} Q_0 \Big{(}\frac{\hat{x}-\xi_0}{\sqrt{h}}\Big{)} e^{-\frac{i}{h} x_0 \cdot\hat{x}}   e^{-\frac{1}{2h} (\hat{x}-\xi_0) \cdot \Gamma_0(\hat{x}-\xi_0)},$$
where we identify both $S_{x_0}^*\R^d$ and $\Sp^{d-1}$ with $\{ \hat{x}\in \R^d; |\hat{x}|=1\}$, and 
where $|\cdot|$ denotes the distance in $\R^d$. We shall say that $\phi_{x_0,\xi_0,\Gamma_0,Q_0}$ is a Gaussian state \emph{centred at $(x_0,\xi_0)$}.

Our theorem says that the image of a Gaussian state by the scattering matrix is, up to a small remainder, a Gaussian state centred at $\Phi^t(x_0,\xi_0)$ for $t$ large enough.

\begin{theoreme}\label{TheoScatGauss1}
Let $(x_0,\xi_0) \in S^*\R^d$, let $\Gamma_0$ be a symmetric $d\times d$ matrix with positive definite real part, and let $Q_0$ be a polynomial in $d$ variables with coefficients independent of $h$. 

Then there exists $(x_1,\xi_1) \in S^*\R^d$, $\delta_1\in \R$, $\Gamma_1$ a symmetric $d\times d$ matrix with positive definite real part, and polynomials $(Q_1^k)_{k\in \N}$ such that the following holds. If $N\in \N$, write $\tilde{Q}_1^N:= \sum_{0\leq k \leq N} h^{j/2} Q_1^k$. We have, for any $N\in \N$ 

\begin{equation}\label{scatteringonGaussian8}
\begin{aligned}
S_h\phi_{x_0,\xi_0,\Gamma_0,Q_0}
= e^{i\frac{\delta_1}{h}}\phi_{x_1,\xi_1,\Gamma_1,Q_1^N} + R_N,
\end{aligned}
\end{equation}
where $\|R_N\|_{C^0}= O(h^{(N+1)/2})$.

Furthermore, if we write for $i=0,1$, $(\omega_i,\eta_i):= (\xi_i, x_i- (x_i\cdot \xi_i)\xi_i)\in T^*\Sp^{d-1}$, we have
$$(\omega_1,\eta_1)= \kappa (\omega_0,\eta_0).$$
\end{theoreme}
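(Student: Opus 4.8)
The plan is to realize the Gaussian state on the sphere as the incoming data of the scattering solution (\ref{defscattering}), to identify that solution with a propagated Gaussian wave packet (coherent state) for $P_h$, and to read off the outgoing data from its large-$|x|$ asymptotics. The bridge in both directions is a semiclassical stationary-phase identity relating a Gaussian state on $\Sp^{d-1}$ to a Gaussian beam in $\R^d$ centred on a classical ray. Concretely, I expect the correspondence to send the parameters $(x_0,\xi_0,\Gamma_0,Q_0)$ to the incoming ray $\rho_{\omega_0,\eta_0}$, with $\omega_0=\xi_0$ the incoming direction and $\eta_0 = x_0-(x_0\cdot\xi_0)\xi_0$ the impact parameter, so that the identity $(\omega_1,\eta_1)=\kappa(\omega_0,\eta_0)$ becomes essentially tautological once the beam has been propagated through $\spt V$.

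First I would establish the sphere-to-beam correspondence. Writing the incoming part $|x|^{-(d-1)/2}e^{-i|x|/h}\phi_{in}(-\hat x)$ of (\ref{defscattering}) against the Gaussian data $\phi_{x_0,\xi_0,\Gamma_0,Q_0}$, a stationary-phase analysis localizes the incoming wave along the ray $x=t\omega_0+\eta_0$ for $t<-T_0$, with $\omega_0=\xi_0$ and $\eta_0=x_0-(x_0\cdot\xi_0)\xi_0$. The Gaussian profile governed by $\Gamma_0$ and the polynomial $Q_0$ fix, via the Hessian of the phase at the critical point, the initial width matrix and amplitude of the corresponding incoming Gaussian wave packet in $\R^d$. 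The cut-off at scale $h^{1/3}$ together with the $\sqrt h$-scaling of the argument of $Q_0$ guarantee that this expansion is valid to all orders in $\sqrt h$, with the critical point non-degenerate.

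Next I would propagate this incoming wave packet. The stationary solution $u$ of $(P_h-\tfrac12)u=0$ whose incoming part is the prescribed beam can be built by a Gaussian-beam/WKB ansatz following the Hamiltonian flow $\Phi^t$ (equivalently, by integrating the coherent-state evolution of $e^{-itP_h/h}$). By the standard theory of propagation of Gaussian wave packets, the centre follows $\Phi^t(\rho_{\omega_0,\eta_0})$, the width matrix evolves by the associated linearized (Riccati) flow, a phase $\delta_1/h$ given by the classical action accumulates, and the amplitude admits an expansion in powers of $\sqrt h$ producing the family $(Q_1^k)_{k\in\N}$, the errors being $O(h^\infty)$ on each compact time interval. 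The non-trapping hypothesis (\ref{nontrapping}) ensures that for $t\geq T_0$ the packet has left $\spt V$ and propagates freely along the outgoing ray $x=\omega_1(t-t')+\eta_1$, where $(\omega_1,\eta_1)=\kappa(\omega_0,\eta_0)$ by the very definition of $\kappa$. Applying the inverse of the stationary-phase correspondence to the outgoing free beam then extracts the coefficient of $|x|^{-(d-1)/2}e^{i|x|/h}$, which is again a Gaussian state with parameters dictated by $\kappa(\omega_0,\eta_0)$ and the propagated width matrix, yielding $(x_1,\xi_1)$, $\Gamma_1$, $\delta_1$ and the $Q_1^N$.

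The main obstacle is uniformity and bookkeeping rather than any single conceptual difficulty. Making the two stationary-phase reductions rigorous with genuine all-order expansions, tracking precisely how $\Gamma_0$ and $Q_0$ transform through the linearized flow and the two transforms, and controlling $R_N$ in $C^0$ uniformly in $h$ to obtain the stated $O(h^{(N+1)/2})$ bound require care. Keeping the phase normalizations consistent — in particular reconciling the prefactor $e^{i\pi(d-1)/2}$ in the definition of $S_h$ with the Maslov-type contributions arising from the signatures of the Hessians at the two critical points — is where I expect most of the delicate work to lie.
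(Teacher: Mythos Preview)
Your plan is essentially the paper's own: relate a Gaussian state on $\Sp^{d-1}$ to a Gaussian beam in $\R^d$ via stationary phase in the large-$|x|$ asymptotics, propagate the beam through $\spt V$ using the standard coherent-state evolution, and read off the outgoing data. The only noteworthy differences are of implementation: the paper works in the reverse direction (it starts from a beam $u^0_h$ in $\R^d$, builds an exact generalized eigenfunction as $\int_\R e^{it/2h}U(t)u^0_h\,\mathrm{d}t$ split into free pieces plus a resolvent correction, and then identifies both $E_h^{in}$ and $E_h^{out}$ as Gaussian states on the sphere), and it concludes by using that $\Gamma\mapsto\overline{\Gamma}^{-1}$ and $P\mapsto P_\Gamma$ are bijections rather than explicitly inverting the sphere-to-beam correspondence.
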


\begin{remarque}\label{rem:patrivial}
Our proof will show that the polynomials $Q_1^k$ have degree at most $3k+ \deg(P)$.
Furthermore, the constant $\delta_1$, the polynomials $Q_1^k$ and the matrix $\Gamma_1$ depend only on $Q_0$, $\Gamma_0$, and on the values of the potential $V$ in an arbitrarily small neighbourhood of the trajectory $\{\pi_x(\Phi^t (x_1,\xi_1)); t\in \R\}$.
\end{remarque}

This result may of course be iterated, to describe the action of $S_h^k$ on a Gaussian state for any $k\in \Z$.

Our family of Gaussian states satisfies a \emph{resolution of identity} formula. Namely, if we write $\phi_{x,\xi}:= \phi_{x,\xi,Id,1}$, we will prove in section \ref{sectionreso} that, for any function $f\in L^2(\Sp^{d-1})$, we have

\begin{equation}\label{resolution2}f(\omega) = c_h \int_{\Sp^{d-1}}\mathrm{d}\xi \int_{\xi^\perp} \mathrm{d}x  \phi_{x,\xi}(\omega) \int_{\Sp^{d-1}} \mathrm{d}\omega'  \overline{ \phi_{x,\xi}(\omega')} f(\omega'),
\end{equation}
for some constant $c_h$ depending on $h$, such that $c_h\sim_{h\rightarrow 0}  2^{(d-1)/2}(2\pi h)^{-3(d-1)/2}$.

Therefore, combining (\ref{resolution2}) and Theorem \ref{TheoScatGauss1}, one may have information on the action of the scattering matrix on functions that are more general than Gaussian states. 

\begin{proof}[Sketch of proof of Theorem \ref{TheoScatGauss1}]
Let $(x_0,\xi_0) \in S^*\R^d$, let $\Gamma_0$ be a symmetric $d\times d$ matrix with positive definite real part, and $Q_0$ be a polynomial in $d$ variables. Let us write
$$\tilde{\phi}_{x_0,\xi_0,\Gamma_0,Q_0}(\hat{x};h)= Q_0 \Big{(}\frac{\xi_0-\hat{x}}{\sqrt{h}}\Big{)} e^{\frac{i}{h} x_0 \cdot\hat{x}}   e^{-\frac{1}{2h} (\hat{x}-\xi_0) \cdot \Gamma_0(\hat{x}-\xi_0)}.$$

Note that we have $\|\tilde{\phi}_{x_0,\xi_0,\Gamma_0,Q_0}(\cdot;h)- \phi_{x_0,\xi_0,\Gamma_0,Q_0}(\cdot;h)\|_{C^0} = O(h^\infty)$.
It will therefore be sufficient to prove the result with $\phi_{x_0,\xi_0,\Gamma_0,Q_0}(\cdot;h)$ replaced by $\tilde{\phi}_{x_0,\xi_0,\Gamma_0,Q_0}(\cdot;h)$.\footnote{The functions $\phi_{x_0,\xi_0,\Gamma_0,Q_0}(\cdot;h)$, which are less natural than $\tilde{\phi}_{x_0,\xi_0,\Gamma_0,Q_0}(\cdot;h)$ were actually introduced so that (\ref{resolution2}) holds. Without the cut-off $\chi$, it is not clear that the right-hand side of (\ref{resolution2}) should converge.}

We will consider on $\R^d$ a Gaussian state, of the form
\begin{equation*}
u^0_h(x)=P((x-x_0)/\sqrt{h}) e^{\frac{i}{h}x\cdot\xi_0}e^{-\frac{1}{2h}(x-x_0)\cdot \Gamma (x-x_0)},
\end{equation*}
where we suppose here for simplicity that $(x_0,\xi_0)$ is \emph{incoming}, in the sense that for all $t\leq 0$, $\pi_x(\Phi^t(x_0,\xi_0))\notin \spt~ V$. 

If we denote by $U(t)$ the Schrödinger flow generated by $P_h$,
the function $\int_{t\in \R} e^{it/2h} U(t) u^0_h  \mathrm{d}t$ will then be an eigenfunction of $P_h$. We will then try to decompose it like in (\ref{defscattering}). 

The incoming part will be given by $\int_{-\infty}^0 e^{it/2h} U(t) u^0_h  \mathrm{d}t$, which is very close to $$\int_{-\infty}^0 e^{it/2h} U_0(t) u^0_h  \mathrm{d}t,$$ where $U_0$ is the free Schrödinger flow. In section \ref{sectionfree}, we will study the behaviour of $\int_{-\infty}^0 e^{it/2h} U_0(t) u^0_h  \mathrm{d}t$ as $|x|\rightarrow \infty$.

To analyse the outgoing part, given by $\int_{0}^{\infty} e^{it/2h} U(t) u^0_h  \mathrm{d}t$, we will use the fact that, when a Gaussian state is propagated by the Schrödinger flow, it remains a Gaussian state (see section \ref{Reminder} for more details). We will use this to propagate $u_h^0$ until a time $T>0$ such that $\Phi^T(x_0,\xi_0)$ is outgoing. The integral $\int_{T}^{\infty} e^{it/2h} U(t) u^0_h  \mathrm{d}t$ is then very close to $\int_{T}^{\infty} e^{it/2h} U_0(t) U(T) u^0_h  \mathrm{d}t$. We will then use the results of section \ref{sectionfree} to describe the behaviour of this integral as $|x|\rightarrow \infty$. These asymptotics will give us the result of Theorem \ref{TheoScatGauss1}.
\end{proof}

\subsection{Relation to other works}\label{Others}
The study of the semi-classical properties of the scattering matrix, and its links with the scattering map has a long history. The scattering amplitude, that is to say, the integral kernel of $S_h-Id$ was expanded as a semi-classical series in different situations in \cite{majda1976high}, \cite{guillemin1977sojourn},
\cite{vainberg1977quasiclassical}, 
\cite{protas1982quasiclassical},  \cite{robert1987semi},\cite{yajima1987quasi}, and
\cite{michel2004semi}.

It was shown in \cite{Alex} that, microlocally near non-trapped points, the scattering matrix\footnote{Although not stated explicitly, the convention for the definition of the scattering matrix in \cite{Alex} is the same as ours, i.e., $S_h$ and not $\tilde{S}_h$} $S_h$ is a \emph{Fourier Integral Operator} quantizing the scattering map $\kappa$. This result was extended to short range potentials in \cite{alexandrova2006structure}, and to non-trapping asymptotically conical manifolds in \cite{HaWu}.

These results were then used in \cite{EquidSph}, \cite{Equid}, \cite{EquidPol}, and \cite{ingremeau2016equidistribution} to obtain some equidistribution results on the spectrum of $S_h$.

Although the results presented here could be deduced from those of \cite{Alex}, the method used here is quite different, and somehow simpler and more explicit. In particular, it is not clear how one could deduce from Alexandrova's results that the image of a Gaussian state by the scattering matrix depends only on the values of the potential close to a single trajectory, as stated in Remark \ref{rem:patrivial}.

Our approach, based on the propagation of coherent states, is largely inspired by \cite{robert2007propagation} and \cite{combescure2012coherent}.
Actually, in \cite{robert2007propagation} and \cite{combescure2012coherent}, the action of the \emph{scattering operator} on Gaussian states is described precisely. The scattering matrix can be seen as the restriction of the scattering operator to an energy shell. However, it does not seem easy to deduce our result from theirs, and the approach presented below, based on asymptotics as $|x|\rightarrow\infty$, seems more direct.

Though we only considered the simplest case of potential scattering, the result of the present paper also hold for compactly supported metric perturbations of the Laplacian, with a similar proof . Our results should still hold if the trapped set is non-empty, as long as the cut-off resolvent is bounded by some negative power of $h$, and that the Gaussian state we consider is centred on a point which is not trapped in the future. Actually, the methods employed here should in principle work as long as we consider Gaussian states centred at some points which spend less than the Ehrenfest time in the interaction region; this could, in some situations (for example, if the trapped set is hyperbolic), allow us to describe the action of the scattering matrix outside of a neighbourhood of the trapped set whose size depends on $h$. If the trapped set is hyperbolic, and the topological pressure of half the Jacobian flow is negative, it should be possible to combine the ideas of the present note with the methods developed in \cite{NZ} and  \cite{Ing} to describe the scattering matrix microlocally near the trapped set. These issues will be pursued elsewhere.

\paragraph{Acknowledgements} The author would like to thank the anonymous referees for their many comments, which greatly helped to improve the presentation of this paper.

The author was funded by the LabEx IRMIA, and partially supported by the
Agence Nationale de la Recherche project GeRaSic (ANR-13-BS01-0007-01).
\section{Free evolution of a Gaussian wave packet times a polynomial}\label{sectionfree}
Let $(x_0,\xi_0)\in S^*\R^d$. Let $P$ be a polynomial in $x$ of degree $n$, and let $\Gamma$ be a symmetric $d\times d$ matrix with positive definite real part.
We consider on $\mathbb{R}^d$ the function

\begin{equation}\label{GaussienGeneral}
u^0_h(x):=P\Big{(}\frac{x-x_0}{\sqrt{h}}\Big{)} e^{\frac{i}{h}x\cdot\xi_0}e^{-\frac{1}{2h}(x-x_0)\cdot \Gamma (x-x_0)}.
\end{equation}

We shall denote by $U_0= e^{i t h\Delta/2}$ the free Schrödinger evolution. Fix $T>0$. We want to consider $$v_h:=\int_{T}^{+\infty} U_0(t) u^0_h e^{it/2h} \mathrm{d}t.$$

More precisely, the aim of this section is to compute 
$$\lim \limits_{|x|\rightarrow \infty} |x|^{(d-1)/2} e^{-i|x|/h} v_h( \hat{x} |x|).$$

\paragraph{Considerations on the Fourier transform of a Gaussian times a polynomial}
We take the following convention for the Fourier transform:
$$\mathcal{F} \phi(\xi) := \int_{\R^d} e^{-i x\cdot \xi} \phi(x) \mathrm{d}x.$$

Consider an invertible symmetric matrix $\Gamma\in M_d(\C)$ with positive definite real part, and a polynomial $P$ of $d$ variables and of degree $n$. 
Then there exists a unique polynomial of degree $n$, which we shall denote by $P_\Gamma$ such that $\mathcal{F}\big{(} P(x) e^{-\frac{x\cdot \Gamma x}{2}}\big{)}(\xi) = P_\Gamma(\xi) e^{-\frac{\xi\cdot \Gamma^{-1}\xi}{2}}$.

For any $\Gamma$, the map $P\mapsto P_\Gamma$ is an automorphism of the space of polynomials of degree $n$. The image of the polynomial constant equal to 1 is the constant 
\begin{equation}\label{imageun}
\frac{(2\pi)^{d/2}}{\sqrt{\det \Gamma}},
\end{equation}
where $A\mapsto \sqrt{\det A}$ is the unique analytic branch satisfying $\sqrt{\det A}>0$ when $A$ is real (cf. \cite[\S 3.4]{hormander1990analysis}).

In the sequel, we will need an asymptotic for $P_{\Gamma+it} (t\xi)$ as $t\rightarrow +\infty$.
By definition, we have:
\[\begin{aligned}
P_{\Gamma+it}(t\xi) e^{-\frac{t^2}{2} \xi \cdot (\Gamma+it Id)^{-1} \xi} &=\int_{\R^d} P(x) e^{-\frac{1}{2} x \cdot (\Gamma+it Id) x} e^{-it x\cdot \xi} \mathrm{d}x\\
&= \int_{\R^d} P(x) e^{-\frac{1}{2} x \cdot \Gamma x} e^{-it (|x|^2/2+ x\cdot \xi)} \mathrm{d}x.
\end{aligned}\]

We can use stationary phase here, with $1/t$ as a small parameter. The phase $x\cdot \xi + |x|^2/2$ is stationary at $x=-\xi$, so that we have
\[\begin{aligned}
P_{\Gamma+it}(t\xi) e^{-\frac{t^2}{2} \xi \cdot (\Gamma+it Id)^{-1} \xi} &= e^{-di\pi/4}P(-\xi) e^{-\frac{1}{2}\xi \cdot \Gamma \xi} e^{\frac{1}{2}it|\xi|^2} \Big{(}\frac{2\pi}{t}\Big{)}^{d/2} + O( t^{-d/2-1}). 
\end{aligned}\]

On the other hand, we have  
\begin{equation}\label{DLMatrix}
(\Gamma+it Id)^{-1}= -\frac{i}{t}Id + \frac{1}{t^2}\Gamma + O\Big{(}\frac{1}{t^3}\Big{)},
\end{equation}
 so that
$$e^{-\frac{t^2}{2} \xi \cdot (\Gamma+it Id)^{-1} \xi}= e^{i\frac{t}{2} |\xi|^2} e^{-\frac{1}{2} \xi \Gamma \xi} + O\Big{(}\frac{1}{t}\Big{)}.$$

Hence 
\begin{equation}\label{equivpoly}
\begin{aligned}
P_{\Gamma+it}(t\xi) &= e^{-di\pi/4}P(-\xi)\Big{(}\frac{2\pi}{t}\Big{)}^{d/2} + O( t^{-d/2-1}). 
\end{aligned}
\end{equation}

\paragraph{Semiclassical Fourier transform}

We define the semiclassical Fourier transform by $$\mathcal{F}_h \phi(\xi) := \int_{\R^d} e^{-\frac{i}{h} x\cdot \xi} \phi(x) \mathrm{d}x,$$
and its inverse is given by 
$$\mathcal{F}_h^{-1} \psi(x) :=\frac{1}{(2\pi h)^{d}} \int_{\R^d} e^{\frac{i}{h} x\cdot \xi} \psi(\xi) \mathrm{d}\xi.$$

Note that, for any function $f\in L^2(\R^d)$, we have 
\begin{equation}\label{eq:Rescaling}
\mathcal{F}_h (f(\cdot/\sqrt{h}))(\xi) = h^{d/2} \big{(}\mathcal{F}(f)\big{)}(\xi/\sqrt{h}),
\end{equation}
 and that for any function $f\in H^2(\R^d)$, we have
$\mathcal{F}_h (-h^2 \Delta f ) = \xi^2 \mathcal{F}_h (f)$.

\paragraph{Free evolution}

The semiclassical Fourier transform of $u^0_h$ is given by
\[\begin{aligned}
\mathcal{F}_h u^0_h(\xi) &= h^{d/2}  e^{\frac{i}{h} x_0\cdot \xi_0} P_\Gamma\Big{(}\frac{\xi-\xi_0}{\sqrt{h}}\Big{)} e^{-\frac{i}{h} x_0\cdot \xi} e^{-\frac{1}{2h}(\xi-\xi_0)\cdot \Gamma^{-1}(\xi-\xi_0)}.
\end{aligned}\]

Therefore 
\begin{equation*}
\begin{aligned}
U_0(t) u^0_h(x) &= \mathcal{F}_h^{-1} \Big{(} e^{-it\xi^2/2h} P_\Gamma\Big{(}\frac{\xi-\xi_0}{\sqrt{h}}\Big{)} e^{\frac{i}{h} x_0\cdot (\xi_0-\xi)} e^{-\frac{1}{2h}(\xi-\xi_0)\cdot \Gamma^{-1}(\xi-\xi_0)} \Big{)}(x)\\
&= \frac{1}{(2\pi)^d h^{d/2}}  \int_{\R^d}  e^{-it\xi^2/2h} P_\Gamma\Big{(}\frac{\xi-\xi_0}{\sqrt{h}}\Big{)} e^{\frac{i}{h} x_0\cdot (\xi_0-\xi)} e^{-\frac{1}{2h}(\xi-\xi_0)\cdot \Gamma^{-1}(\xi-\xi_0)} e^{i x\cdot \xi/h} \mathrm{d}\xi\\
&=\frac{1}{(2\pi)^d h^{d/2}} e^{\frac{i}{h} x\cdot \xi_0}\\
&\int_{\R^d} P_\Gamma\Big{(}\frac{\xi'}{\sqrt{h}}\Big{)} e^{\frac{i}{h} (x-x_0)\cdot \xi'} e^{-it \xi'^2/2h} e^{\frac{-it}{h}\xi' \cdot \xi_0} e^{-it \xi_0^2/2h}  e^{-\frac{1}{2h}\xi'\cdot \Gamma^{-1} \xi' }  \mathrm{d}\xi' ~~\text{ by setting } \xi= \xi'+\xi_0\\
&=\frac{1}{(2\pi)^d h^{d/2}} e^{\frac{i}{h} x\cdot \xi_0} e^{-it/2h}\int_{\R^d} P_\Gamma\Big{(}\frac{\xi'}{\sqrt{h}}\Big{)} e^{\frac{i}{h} (x-x_0-t\xi_0)\cdot \xi'}  e^{-\frac{1}{2h}\xi'\cdot (\Gamma^{-1}+it Id) \xi' }  \mathrm{d}\xi'.
\end{aligned}
\end{equation*}
By (\ref{eq:Rescaling}), we obtain that 
\begin{equation}\label{freeevo}
\begin{aligned}
U_0(t) u^0_h(x)= \frac{1}{(2\pi)^d} e^{\frac{i}{h} x\cdot \xi_0} e^{-it/2h}(P_{\Gamma})_{\Gamma^{-1}+it Id} \Big{(}-\frac{x-x_0-t\xi_0}{\sqrt{h}}\Big{)}  e^{-\frac{1}{2h} (x-x_0-t\xi_0)\cdot (\Gamma^{-1}+it Id)^{-1} (x-x_0-t\xi_0)}.
\end{aligned}
\end{equation}

\paragraph{Behaviour at infinity}

Consider the integral $v_h=\int_0^\infty U_0(t) u^0_h e^{it/2h} \mathrm{d}t.$
Thanks to the computations in the previous paragraph, we see that this integral converges on any compact set of $\R^d$. We want to compute $\lim \limits_{|x|\rightarrow \infty} |x|^{(d-1)/2} e^{-i|x|/h} v_h( \hat{x} |x|)$.

Note that, in what follows, $h$ will only be a parameter, which will play no special role.
 We have

\[\begin{aligned}
 &v_h( \hat{x} |x|)= \\
& \frac{1}{(2\pi)^d} e^{\frac{i}{h} x\cdot \xi_0} \int_{t=T}^\infty (P_{\Gamma})_{\Gamma^{-1}+it Id} \Big{(}-\frac{x-x_0-t\xi_0}{\sqrt{h}}\Big{)} e^{-\frac{1}{2h} (x-x_0-t\xi_0)\cdot (\Gamma^{-1}+it Id)^{-1} (x-x_0-t\xi_0)}\mathrm{d}t\\
&=\frac{1}{(2\pi)^d} e^{\frac{i}{h} x\cdot \xi_0} \\&\int_{\tau=\frac{T}{|x|}}^\infty (P_{\Gamma})_{\Gamma^{-1}+i |x|\tau Id} \Big{(}-\frac{|x|}{\sqrt{h}}(\hat{x}-x_0/|x|-\tau\xi_0)\Big{)} e^{-\frac{|x|^2}{2h} (\hat{x}-x_0/|x|-\tau\xi_0)\cdot (\Gamma^{-1}+i|x|\tau Id)^{-1} (\hat{x}-x_0/|x|-\tau\xi_0)} |x|\mathrm{d}\tau,
\end{aligned}\]
where we obtained the last line by setting $t= |x| \tau$.

Note that for $\tau$ not close to one, the above integrand is a $O(|x|^{-\infty})$.
$h$ being fixed, let us compute an asymptotic expansion of $e^{-\frac{|x|^2}{2h} (\hat{x}-x_0/|x|-\tau\xi_0)\cdot (\Gamma^{-1}+i|x|\tau Id)^{-1} (\hat{x}-x_0/|x|-\tau\xi_0)}$ as $|x|\rightarrow \infty$, for $\tau \geq 1/2$. We have, using (\ref{DLMatrix}) that
\[\begin{aligned}
&|x|^2 (\hat{x}-x_0/|x|-\tau\xi_0)\cdot (\Gamma^{-1}+i|x|\tau Id)^{-1} (\hat{x}-x_0/|x|-\tau\xi_0)\\
 &= - \frac{i|x|}{\tau} |\hat{x}-x_0/|x|-\tau\xi_0|^2 + \frac{1}{\tau^2} (\hat{x}-x_0/|x|-\tau\xi_0) \cdot \Gamma^{-1}(\hat{x}-x_0/|x|-\tau\xi_0) + O \Big{(}\frac{1}{|x|}\Big{)}\\
&= - \frac{i|x|}{\tau} |\hat{x}-\tau\xi_0|^2 +\frac{2i}{\tau} x_0\cdot(\hat{x}-\tau \xi_0) +  \frac{1}{\tau^2} (\hat{x}-\tau\xi_0) \cdot \Gamma^{-1}(\hat{x}-\tau\xi_0) + O \Big{(}\frac{1}{|x|}\Big{)} .
\end{aligned}\]

Therefore, 
\[\begin{aligned}
&v_h(|x| \hat{x})\sim \frac{1}{(2\pi)^d} e^{\frac{i}{h} x\cdot \xi_0}\\
&\times \int_{1/2}^{+\infty} e^{i\frac{|x|}{2h\tau }|\hat{x}-\tau \xi_0|^2} (P_{\Gamma})_{\Gamma^{-1}+i |x|\tau Id} \Big{(}-\frac{|x|}{\sqrt{h}}(\hat{x}-\tau\xi_0)\Big{)} e^{\frac{1}{2h} (-\frac{2i}{\tau} x_0\cdot(\hat{x}-\tau \xi_0) +  \frac{1}{\tau^2} (\hat{x}-\tau\xi_0) \cdot \Gamma^{-1}(\hat{x}-\tau\xi_0))} |x|\mathrm{d}\tau.
\end{aligned}\]

This can be seen as a stationary phase, with $|x|^{-1}$ as a small parameter. The phase is, up to a factor $1/(2h)$, $\phi(\tau) = |\hat{x}-\tau \xi_0|^2/\tau$. By an easy computation, we get 
$$\phi'(\tau)= -\frac{1}{\tau^2} (\hat{x}-\tau \xi_0)\cdot (\hat{x}+\tau \xi_0) = -\frac{1}{\tau^2} (1-\tau^2 )= 1-\frac{1}{\tau^2},$$
which vanishes if and only if $\tau=1$.
We have $\phi''(\tau) =  \frac{2}{\tau^3}$, so that $\phi''(1)= 2$.

By the stationary phase formula, we get that 
\[\begin{aligned} 
&|x|^{(d-1)/2} e^{-i|x|/h} v_h( \hat{x} |x|) \\
&\sim \frac{1}{(2\pi)^d}  |x|^{(d-1)/2} e^{-i|x|/h}e^{i\pi/4}\pi^{1/2} |x|^{-1/2} e^{\frac{i}{h} x\cdot \xi_0} e^{i \frac{|x|}{2h} |\hat{x}-\xi_0|^2} \\
&\times (P_{\Gamma})_{\Gamma^{-1}+i |x| Id} \Big{(}-\frac{|x|}{\sqrt{h}}(\hat{x}-\xi_0)\Big{)} e^{-\frac{1}{2h} (2i x_0\cdot(\hat{x}-\xi_0) +  (\hat{x}-\xi_0) \cdot \Gamma^{-1}(\hat{x}-\xi_0))} |x|.
\end{aligned}
\]
Thanks to (\ref{equivpoly}), we obtain that
\[\begin{aligned} 
&|x|^{(d-1)/2} e^{-i|x|/h} v_h( \hat{x} |x|) \\
&\sim \frac{1}{(2\pi)^d} |x|^{(d-1)/2}  e^{-i|x|/h} \pi^{1/2} |x|^{1/2} e^{\frac{i}{h} x\cdot \xi_0} e^{\frac{i|x|}{2h} |\hat{x}-\xi_0|^2} P_\Gamma\Big{(}{\frac{\hat{x}-\xi_0}{\sqrt{h}}}\Big{)}  \\
&\times e^{i(1-d)\pi/4} \Big{(}\frac{2\pi}{|x|}\Big{)}^{d/2}  e^{-\frac{1}{2h} (2i x_0\cdot(\hat{x}-\xi_0) +  (\hat{x}-\xi_0) \cdot \Gamma^{-1}(\hat{x}-\xi_0)} \\
&\sim e^{i(1-d)\pi/4}\frac{\pi^{1/2}}{(2\pi)^{d/2}}  P_\Gamma\Big{(}{\frac{\hat{x}-\xi_0}{\sqrt{h}}}\Big{)} e^{-\frac{i}{h} x_0\cdot(\hat{x}-\xi_0)}  e^{-\frac{1}{2h} (\hat{x}-\xi_0) \cdot \Gamma^{-1}(\hat{x}-\xi_0))} e^{\frac{i|x|}{2h} (2\hat{x}\cdot \xi_0 - 2 +| \hat{x}-\xi_0|^2)}\\
&\sim e^{i(1-d)\pi/4}\frac{\pi^{1/2}}{(2\pi)^{d/2}} P_\Gamma\Big{(}{\frac{\hat{x}-\xi_0}{\sqrt{h}}}\Big{)} e^{-\frac{i}{h} x_0\cdot (\hat{x}-\xi_0)}  e^{-\frac{1}{2h} (\hat{x}-\xi_0) \cdot \Gamma^{-1}(\hat{x}-\xi_0)}.
\end{aligned} \]

All in all, we have obtained that, for any $T\in \R$, we have
\begin{equation}\label{futur}
\begin{aligned}
&\lim\limits_{|x|\rightarrow \infty} |x|^{(d-1)/2} e^{-i|x|/h} \Big{(}\int_{T}^{+\infty} U_0(t) u^0_h e^{it/2h} \mathrm{d}t\Big{)}(|x|\hat{x})  \\
&= e^{i(1-d)\pi/4}\frac{\pi^{1/2}}{(2\pi)^{d/2}} P_\Gamma\Big{(}{\frac{\hat{x}-\xi_0}{\sqrt{h}}}\Big{)} e^{-\frac{i}{h} x_0\cdot(\hat{x}-\xi_0)}  e^{-\frac{1}{2h} (\hat{x}-\xi_0) \cdot \Gamma^{-1}(\hat{x}-\xi_0)}.
\end{aligned}
\end{equation}

On the other hand, recall that for any $f\in L^2(\R^d)$, we have $\big{(}U_0(-t) f\big{)}(x) = \overline{\big{(}U_0(t) \overline{f}\big{)}(x)} $, so that
\begin{equation*}
\begin{aligned} 
&\lim\limits_{|x|\rightarrow \infty} |x|^{(d-1)/2} e^{i|x|/h} \Big{(}\int_{-\infty}^{T} U_0(t) u^0_h e^{it/2h} \mathrm{d}t\Big{)}(|x|\hat{x})\\
&= \overline{\lim\limits_{|x|\rightarrow \infty} |x|^{(d-1)/2} e^{-i|x|/h} \Big{(}\int_{-T}^{+\infty} U_0(t) \overline{u^0_h} e^{it/2h} \mathrm{d}t\Big{)}(|x|\hat{x})}.
\end{aligned} 
\end{equation*}
Now, since $\overline{u^0_h}$ is of the form (\ref{GaussienGeneral}) with $\xi_0$ replaced by $-\xi_0$ and $\Gamma$ replaced by $\overline{\Gamma}$, (\ref{futur}) gives us  

\begin{equation}\label{passe}
\begin{aligned} 
&\lim\limits_{|x|\rightarrow \infty} |x|^{(d-1)/2} e^{i|x|/h} \Big{(}\int_{-\infty}^{T} U_0(t) u^0_h e^{it/2h} \mathrm{d}t\Big{)}(|x|\hat{x}) \\
&= e^{i(d-1)\pi/4} \frac{\pi^{1/2}}{(2\pi)^{d/2}} P_\Gamma\Big{(}{-\frac{\xi_0+\hat{x}}{\sqrt{h}}}\Big{)} e^{\frac{i}{h} x_0\cdot(\hat{x}+\xi_0)}  e^{-\frac{1}{2h} (\hat{x}+\xi_0) \cdot \overline{\Gamma}^{-1}(\hat{x}+\xi_0)}.
\end{aligned} 
\end{equation}
\section{Construction of a generalized eigenfunction}
Let $(x_0,\xi_0)\in S^*\R^d$. Let $P$ be a polynomial in $x$ of degree $n$, and let $\Gamma$ be a symmetric $d\times d$ matrix with positive definite real part.
As in the previous section, we consider on $\mathbb{R}^d$ the function

\begin{equation}\label{GaussienGeneral2}
u^0_h(x):=P\Big{(}\frac{x-x_0}{\sqrt{h}}\Big{)} e^{\frac{i}{h}x\cdot\xi_0}e^{-\frac{1}{2h}(x-x_0)\cdot \Gamma (x-x_0)}.
\end{equation}

Since $V$ has compact support, we may find a time $t_-$ such that for all $t\geq t_-$, $x_0-t\xi_0\notin \spt ~V$. We shall write $(x_-,\xi_-):= (x_0-t_-\xi_0,\xi_0)$.

Let us consider
\begin{equation*}
u^-_h:= U_0(t_-) u_h^0.
\end{equation*}

By (\ref{freeevo}), $u_h^-$ may be put in the form
\begin{equation}\label{GaussienGeneral6}
u^-_h(x):=P_-\Big{(}\frac{x-x_-}{\sqrt{h}}\Big{)} e^{-i t_-/(2h)} e^{\frac{i}{h}x\cdot\xi_-}e^{-\frac{1}{2h}(x-x_-)\cdot \Gamma_- (x-x_-)},
\end{equation}
where $P_-$ is a polynomial in $x$ of degree $n$, and let $\Gamma_-$ is a symmetric $d\times d$ matrix with positive definite real part.

Since the Hamiltonian has been supposed non-trapping, we may find a time $t_+$ such that $\pi_x\big{(} \Phi^{t_++t}(x_-,\xi_-)\big{)}$ is not in $\spt~ V$ for any $t\geq 0$. We shall write $(x_+,\xi_+):= \Phi^{t_+}(x_-,\xi_-)$.

In particular, for any $t\geq 0$, we have $\Phi^{t_+ +t}(x_-,\xi_-) = (x_+ +t \xi_+, \xi_+)$. Recalling the definition of the scattering map, we see that, if we write $x_\pm^\perp:=x_\pm -(x_\pm \cdot\xi_\pm)\xi_\pm $, then 
\begin{equation}\label{diffusionclassique}
 (\xi_+, x_+^\perp) = \kappa (\xi_-,x_-^\perp),
\end{equation}
where we identified $(\xi_\pm, x_\pm^\perp )$ with points in $T^*\mathbb{S}^{d-1}$.

We denote by $U(t)= e^{-\frac{it}{h} P_h}$ the Schrödinger propagator. We set
\begin{equation}\label{forward}
u^+_h:= U (t_+) u^-_h.
\end{equation}

Let us recall how the results from \cite{robert2007propagation} can be used to describe $u^+_h$ in the semi-classical limit.

\subsection{Review of the propagation of a Gaussian times a polynomial}\label{Reminder}

In \cite[Theorem 0.1]{robert2007propagation}, it is shown that for any $t\in \mathbb{R}$, for any $N\in \N$, and for any function of the form (\ref{GaussienGeneral6}), we have
\begin{equation}\label{propagGauss}
U(t)u^-_h (x)= \tilde{u}_h (t,x;N) + R_h^N(t,x),
\end{equation}
with
\begin{equation*}
\tilde{u}_h(t,x;N) = e^{i\frac{\delta_t}{h}} \sum_{0\leq j\leq N} h^{j/2} \pi_j\Big{(}t, \frac{x- x_t}{\sqrt{h}}\Big{)} e^{\frac{i}{h}x\cdot\xi_t} e^{-\frac{1}{2h}(x-x_t)\cdot \Gamma_t (x-x_t)},
\end{equation*}
where
\begin{itemize}
\item $z_t=(x_t,\xi_t)= \Phi^t(x_-,\xi_-)$.
\item $\Gamma_t$ is a symmetric complex matrix with positive definite real part.
More precisely, if we write
\begin{equation*}
A_t:= \frac{\partial x_t}{\partial x_-},~~B_t:= \frac{\partial \xi_t}{\partial x_-},~~ C_t:= \frac{\partial x_t}{\partial \xi_-},~~ D_t:= \frac{\partial \xi_t}{\partial \xi_-},
\end{equation*}
we have $$\Gamma_t= (C_t+iD_t\Gamma)(A_t+iB_t\Gamma)^{-1}.$$
\item The $\pi_j$ are polynomials of degree at most $3j + \deg(P)$, and 
\begin{equation}\label{enzero}
\pi_-(t)= P(0) (\det (A_t+iB_t \Gamma))^{1/2}.
\end{equation}
\item $\delta_t$ is the action integral
$$\delta_t= \int_0^t (\frac{|\xi_s|^2}{2}-V(x_s)) \mathrm{d}s - \frac{x_t\cdot\xi_t + x_-\cdot \xi_-}{2} = \int_0^t x_s\cdot \nabla V(x_s) \mathrm{d}s + \frac{x_t\cdot\xi_t - x_-\cdot \xi_-}{2}-t.$$
Note that, if $x_t\notin  \spt ~ V$, we have $\delta'(t)=0$. 

\item For any $\alpha\in \mathbb{N}$, there exists $C_\alpha(t)>0$ such that
\begin{equation}\label{fastdecay}
\big{\|} (1+|x|)^\alpha R_h^N(t,x)\big{\|}_{L^2}\leq C_\alpha(t) h^{(N+1)/2}.
\end{equation}
\end{itemize}

Thanks to this, we see that the function $u^+_h$ defined in (\ref{forward}) may be put in the form
$$u^+_h (x) = \tilde{u}^+_h (x)+ R_h^N(x),$$
where
\begin{equation}\label{propagGauss2}
\tilde{u}^+_h (x)= e^{i\frac{\delta_+}{h}} \sum_{0\leq j\leq N} h^{j/2} \pi^+_j\Big{(}\frac{x- x_+}{\sqrt{h}}\Big{)} e^{\frac{i}{h}x\cdot\xi_+} e^{-\frac{1}{2h}(x-x_+)\cdot \Gamma_+ (x-x_+)} ,
\end{equation}
and where $R_h^N(x)$ satisfies (\ref{fastdecay}).

\subsection{Construction of a generalized eigenfunction}
Let us consider
$$E^0_h:=\int_{-\infty}^{0} U_0(t) u^-_h e^{it/2h} \mathrm{d}t+\int_{0}^{t_+} U(t) u^-_h e^{it/2h} \mathrm{d}t+\int_{0}^{+\infty} U_0(t) \tilde{u}^+_h e^{it/2h} \mathrm{d}t.$$

Thanks to the computations in section \ref{sectionfree}, we see that for any compact set $\mathcal{K}$ and any $k\in \N$, $\| U_0(t) u^-_h\|_{C^k(\mathcal{K})}$ decays exponentially fast as $t\rightarrow -\infty$, so that $\int_{-\infty}^{0} \|U_0(t) u^-_h e^{it/2h}\|_{C^k(\mathcal{K})} \mathrm{d}t$ converges.

Thanks to (\ref{freeevo}), we may describe $ U_0(\pm t) \tilde{u}_h^\pm$ for all $t\geq 0$,  and we also have thanks to the computations in section \ref{sectionfree} that $\int_{0}^{+\infty} \| U_0(t) \tilde{u}^+_h e^{it/2h}\|_{C^k(\mathcal{K})} \mathrm{d}t$ converges. Therefore, the function $E_h^0$ is well-defined, and belongs to $C^\infty(\R^d)$.

If $f\in H^2(\R^d)$, we have 
\begin{align*}
i h \frac{\mathrm{d}}{\mathrm{d} t} \big{[} e^{it/2h} U_0(t) f \big{]} &= \big{(}-\frac{\Delta}{2} - \frac{1}{2}\big{)} e^{it/2h} U_0(t) f\\
i h \frac{\mathrm{d}}{\mathrm{d} t} \big{[} e^{it/2h} U(t) f \big{]} &= \big{(}P_h- \frac{1}{2}\big{)} e^{it/2h} U(t) f.
\end{align*}

Therefore, differentiating $E_h^0$ under the integral signs, we obtain
\begin{equation*}
\begin{aligned}
(P_h-1/2) E^0_h &= \int_{-\infty}^{0} (P_h-1/2) U_0(t) u^-_h e^{it/2h} \mathrm{d}t+\int_{0}^{t_+} (P_h-1/2) U(t) u^-_h e^{it/2h} \mathrm{d}t\\
&+\int_{0}^{+\infty} (P_h-1/2) U_0(t) \tilde{u}^+_h e^{it/2h} \mathrm{d}t\\
&=  \int_{-\infty}^{0} \big{[}ih \frac{\mathrm{d}}{\mathrm{d}t}+V\big{]}  U_0(t) u^-_h e^{it/2h} \mathrm{d}t+\int_{0}^{t_+} ih\frac{\mathrm{d}}{\mathrm{d}t} U(t) u^-_h e^{it/2h} \mathrm{d}t\\
&+\int_{0}^{+\infty} \big{[}ih\frac{\mathrm{d}}{\mathrm{d}t}+V\big{]} U_0(t) \tilde{u}^+_h e^{it/2h} \mathrm{d}t\\
&= \int_{-\infty}^{0} V U_0(t) u^-_h e^{it/2h} \mathrm{d}t+ ih u_h^- + ih (u_h^+-u_h^-)- ih\tilde{u}_h^+ + \int_{0}^{+\infty} V U_0(t) \tilde{u}^+_h e^{it/2h} \mathrm{d}t\\
&= \int_{-\infty}^{0} V U_0(t) u^-_h e^{it/2h} \mathrm{d}t+\int_{0}^{+\infty} V U_0(t) \tilde{u}^+_h e^{it/2h} \mathrm{d}t + R_h^N(x).
\end{aligned}
\end{equation*}

Thanks to (\ref{freeevo}), we may describe $ U_0(-t) u_h^-$ and $U_0(t) \tilde{u}_h^+$ for all $t\geq 0$. They are polynomials times Gaussian functions centred at $x_{\pm} \pm t \xi_\pm$, hence they are exponentially small on the support of the potential $V$.

More precisely, there exists $C,c>0$ such that for any $t\geq 0$, we have 
\begin{equation}\label{freedecay}
\|V U_0(- t) u^-_h\|_{L^2} \leq Ce^{-(c+t)/h}, ~~ \|V U_0(t) \tilde{u}^+_h\|_{L^2} \leq Ce^{-(c+t)/h}.
\end{equation}

Therefore, $\int_{-\infty}^{0} V U_0(t) u^-_h e^{it/2h} \mathrm{d}t+\int_{0}^{+\infty} V U_0(t) \tilde{u} ^+_h e^{it/2h} \mathrm{d}t $ is compactly supported with support in $\spt ~V$, and has $L^2$ norm which is $O(h^\infty)$. Since $R_h^N$ satisfies (\ref{fastdecay}), we deduce that for any $\alpha\in \mathbb{N}$, there exists $C_\alpha>0$ such that
\begin{equation*}
\big{\|} (1+|x|)^\alpha (P_h-1/2)E_h^0 \big{\|}_{L^2}\leq C_\alpha h^{(N+1)/2}.
\end{equation*}

We define the semi-classical outgoing resolvent as $R_h= (P_h+(1/2+i0))^{-1}$. It is well-defined on any function in the weighted space $(1+|x|)^{-1} L^2$, and thanks to the non-trapping assumption, we have by \cite[Theorem 2]{robert1987semi} that
\begin{equation}\label{resolvestim}
\big{\|} (1+|x|)^{-1} R_h (1+|x|)^{-1} \big{\|}_{L^2\rightarrow L^2} \leq \frac{C}{h}.
\end{equation}

Write $E_h^1:= R_h (P_h-1/2) E^0_h$. We have
$$ \big{\|} (1+|x|)^{-1} E_h^1 \big{\|}_{L^2} \leq C h^{(N-1)/2}.$$

The function
$$E_h:= E^0_h + E_h^1,$$
which we call a \emph{distorted Gaussian beam}, is bounded polynomially in $|x|$ for every $h$, and it satisfies 
$$(P_h-1/2) E_h = 0.$$

We will now show that it can be written as 
$$
E_h(x)=|x|^{-(d-1)/2}\big{(} e^{-i |x|/h} E_h^{in}(-\hat{x}) +
e^{i |x|/h} E_h^{out}(\hat{x}) \big{)} + O(|x|^{-(d+1)/2}),
$$
where $E_h^{in}$ and $E_h^{out}$ are smooth functions which we will identify.

\subsection{Behaviour of $E_h$ near infinity}
The following lemma guaranties that the term $E_h^1$ is negligible in our computations.
 
 \begin{lemme}
 There exists a function $a_h \in C^\infty(\Sp^{d-1})$ with $\|a_h\|_{L^2}= O(h^{N/2-1})$ such that
 $$E_h^1 (|x|\hat{x}) \sim_{|x|\rightarrow \infty} |x|^{(d-1)/2} e^{i |x|/h} a_h(\hat{x}).$$
 \end{lemme}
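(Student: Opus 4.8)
The plan is to represent $E_h^1$ through the \emph{free} outgoing resolvent and to read its radiation field off the far-field asymptotics of the free Green's function. Set $g_h := (P_h-1/2)E_h^0$, so that $E_h^1 = R_h g_h$ and, since $E_h = E_h^0 + E_h^1$ solves $(P_h-1/2)E_h = 0$, the function $E_h^1$ is the \emph{outgoing} solution of $(P_h-1/2)E_h^1 = -g_h$; equivalently it solves
$$\Big(-\frac{h^2}{2}\Delta - \frac12\Big)E_h^1 = \tilde g_h, \qquad \tilde g_h := -g_h - V E_h^1,$$
with outgoing boundary conditions. The source $\tilde g_h$ is well controlled: the weighted bounds $\|(1+|x|)^\alpha g_h\|_{L^2} = O(h^{(N+1)/2})$ established above handle the first term, while $V$ is compactly supported and $(1+|x|)^{-1}$ is bounded below on $\spt V$, so the bound $\|(1+|x|)^{-1}E_h^1\|_{L^2} = O(h^{(N-1)/2})$ proved just above gives $\|V E_h^1\|_{L^2} = O(h^{(N-1)/2})$. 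Hence $\tilde g_h$ is rapidly decaying, is supported in $\spt V$ up to an $O(h^\infty)$ tail, and satisfies $\|\tilde g_h\|_{L^2} = O(h^{(N-1)/2})$.

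Next I would use the explicit outgoing Green's function $G_0(x,y)$ of $-\frac{h^2}{2}\Delta - 1/2$, which is $2h^{-2}$ times the Helmholtz Green's function at wavenumber $k = 1/h$. Its large-distance asymptotics give, for $y$ in a fixed compact set,
$$G_0(x,y) = c_{d,h}\,|x|^{-(d-1)/2} e^{i|x|/h} e^{-i\hat x\cdot y/h}\big(1 + O(|x|^{-1}) + O((h|x|)^{-1})\big), \qquad |c_{d,h}| \sim C_d\, h^{-(d+1)/2},$$
the power of $h$ coming from the Bessel/Hankel asymptotics. Integrating against $\tilde g_h$ yields the asserted form with
$$a_h(\hat x) = c_{d,h}\,\mathcal{F}_h\tilde g_h(\hat x) = c_{d,h} \int_{\R^d} e^{-i\hat x\cdot y/h}\,\tilde g_h(y)\,\mathrm{d}y .$$
Since $\tilde g_h$ is rapidly decaying, this integral is smooth in $\hat x\in\R^d$, so $a_h\in C^\infty(\Sp^{d-1})$; and since $E_h^1 = R_h g_h$ is genuinely outgoing (equivalently, $G_0$ carries the purely outgoing phase $e^{+ik|x-y|}$), there is no incoming $e^{-i|x|/h}$ contribution.

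For the norm bound I would avoid estimating $\mathcal{F}_h\tilde g_h$ by a restriction inequality, which is lossy, and instead compare the radiation field to the weighted norm already in hand. The key point is that the far-field formula above only becomes accurate once the quadratic phase correction $|y_\perp|^2/(2h|x|)$ is negligible, i.e. for $|x|\gtrsim 1/h$; on that region $|E_h^1(x)|^2 \sim |c_{d,h}|^2 |x|^{-(d-1)} |\mathcal{F}_h\tilde g_h(\hat x)|^2$, so that
$$\big\|(1+|x|)^{-1}E_h^1\big\|_{L^2(\R^d)}^2 \;\geq\; \int_{|x|\geq C/h} \frac{|E_h^1(x)|^2}{(1+|x|)^2}\,\mathrm{d}x \;\sim\; |c_{d,h}|^2\Big(\int_{C/h}^\infty \frac{\mathrm{d}R}{(1+R)^2}\Big)\int_{\Sp^{d-1}} |\mathcal{F}_h\tilde g_h(\omega)|^2\,\mathrm{d}\omega \;\sim\; h\,\|a_h\|_{L^2(\Sp^{d-1})}^2 ,$$
where I used $\int_{C/h}^\infty (1+R)^{-2}\,\mathrm{d}R \sim h$ and $|c_{d,h}|^2\int_{\Sp^{d-1}}|\mathcal{F}_h\tilde g_h|^2 = \|a_h\|_{L^2}^2$. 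Since the left-hand side is $O(h^{N-1})$, this yields $\|a_h\|_{L^2}^2 = O(h^{N-2})$, i.e. $\|a_h\|_{L^2} = O(h^{N/2-1})$, as claimed.

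The main obstacle is precisely this $h$-uniform passage from the radiation field to the weighted $L^2$ norm. Estimating $a_h = c_{d,h}\,\mathcal{F}_h\tilde g_h|_{\Sp^{d-1}}$ through a semiclassical restriction inequality is lossy and yields only a weaker exponent; the sharp exponent $N/2-1$ is obtained by comparing with the weighted norm, and it hinges on the fact that the far-field (Fraunhofer) regime does not set in until $|x|\sim 1/h$, which is what produces the single factor of $h$ in the displayed chain. Making the expansion of $G_0$ uniform in $(h,x,y)$ down to that scale, and controlling the error terms $O(|x|^{-1})$ and $O((h|x|)^{-1})$ there, is the technical heart of the proof; the smoothness of $a_h$ and the absence of an incoming part are immediate from the Green's function representation.
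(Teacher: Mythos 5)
Your Green's--function representation does deliver the qualitative conclusions --- existence of the limit, smoothness of $a_h$, and the absence of an incoming $e^{-i|x|/h}$ term --- and in that respect it is a legitimate substitute for the paper's citation of \cite[Proposition 2.4]{Mel}. The genuine gap is in the norm bound, exactly at the step you yourself call the technical heart, and it cannot be closed within the strategy you propose. Your chain needs the lower bound
\begin{equation*}
\int_{|x|\geq C/h}\frac{|E_h^1(x)|^2}{(1+|x|)^2}\,\mathrm{d}x \;\gtrsim\; h\,\|a_h\|^2_{L^2(\Sp^{d-1})},
\end{equation*}
i.e.\ that on $\{|x|\geq C/h\}$ the error in the Fraunhofer approximation is small \emph{relative to $a_h$} in $L^2(\Sp^{d-1})$ (pointwise the relation ``$\sim$'' is in any case false at directions where $\mathcal{F}_h\tilde g_h$ vanishes). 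Measure that error in the same units as $a_h$: on this region the phase defect $|y|^2/(2h|x|)$ is only $O(1/C)$, bounded but \emph{not} small with $h$, so the estimate you actually have for the error is $O\big(C^{-1}|c_{d,h}|\,\|\tilde g_h\|_{L^1}\big)=O\big(C^{-1}h^{(N-d-2)/2}\big)$, while the quantity you are trying to bound is $\|a_h\|=O(h^{(N-2)/2})$. The error exceeds the target by a factor $h^{-d/2}$, uniformly on the region, and choosing $C$ large does not help: taking $C=C(h)\to\infty$ shrinks the main-term coefficient $\int_{C/h}^{\infty}(1+R)^{-2}\,\mathrm{d}R\sim h/C$ at the same rate, and optimizing still loses a positive power of $h$. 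The only way to beat the trivial bound on the error --- which is itself an oscillatory integral $\int e^{-i\hat x\cdot y/h}(\cdots)\tilde g_h(y)\,\mathrm{d}y$ --- is a stationary-phase/restriction gain of order $h^{(d-1)/2}$, i.e.\ precisely the estimate you announced you would avoid. With the estimates you actually invoke, your argument proves $\|a_h\|=O(h^{(N-d-2)/2})$, not $O(h^{N/2-1})$; since $N$ is arbitrary, that weaker bound would still suffice for Theorem \ref{TheoScatGauss1}, but it is not what you claim.

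Your premise about the alternative is also inverted: the sharp restriction estimate is not lossy --- it is exactly how the paper obtains the exponent. The paper's proof bounds $\|\mathcal{P}_{h,V}^*(1+|x|)^{-1}\|_{L^2\to L^2}^2=\|(1+|x|)^{-1}\mathcal{P}_{h,V}\mathcal{P}_{h,V}^*(1+|x|)^{-1}\|$ and then uses the Stone-type identity $\mathcal{P}_{h,V}\mathcal{P}_{h,V}^*=\frac{2h}{i}\big((P_h-(1/2+i0))^{-1}-(P_h-(1/2-i0))^{-1}\big)$ together with the non-trapping bound (\ref{resolvestim}): the factor $2h$ cancels the $C/h$ of the resolvent estimate exactly, so restricting to the sphere costs $O(1)$. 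This is the semiclassical Agmon--H\"ormander estimate, with no loss, and it is what produces $h^{N/2-1}$. Note finally that even if you repaired your error control by applying this estimate term by term to the expansion of the phase defect, your route would still fall short of the stated exponent, because you pass through the \emph{free} resolvent: your source $\tilde g_h$ contains $VE_h^1$, which is only $O(h^{(N-1)/2})$, a full factor of $h$ worse than $g_h$, so the free restriction bound yields $\|a_h\|=O\big(|c_{d,h}|\,h^{(d-1)/2}\|(1+|x|)\tilde g_h\|_{L^2}\big)=O(h^{(N-3)/2})$. The paper sidesteps this by working with the perturbed Poisson operator throughout, so that the potential never reappears as a source term.
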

 \begin{proof}
 Thanks to \cite[Proposition 2.4]{Mel}, we know that $\lim\limits_{|x|\rightarrow \infty} |x|^{-(d-1)/2} e^{-i |x|/h} E_h^1(|x|\hat{x})$ exists, and is equal to $\frac{1}{2ih}\mathcal{P}_{h,V}^* E_h^1$, where $\mathcal{P}_{h,V}^*$ is the adjoint of the Poisson operator $\mathcal{P}_{h,V} : L^2(\Sp^{d-1})\rightarrow L^2(\R^d)$. 
 
We have
\begin{equation*}
\begin{aligned}
\|\mathcal{P}_{h,V}^* E_h^1\|_{L^2} &= \|\mathcal{P}_{h,V}^* (1+|x|)^{-1} (1+|x|) E_h^1\|_{L^2}\\
&\leq\|\mathcal{P}_{h,V}^* (1+|x|)^{-1}\|_{L^2\rightarrow L^2} \|(1+|x|) E_h^1\|_{L^2}\\
&\leq C  \big{\|}(1+|x|)^{-1}\mathcal{P}_{h,V} \mathcal{P}_{h,V}^* (1+|x|)^{-1}\big{\|}_{L^2} ^{1/2} h^{(N-1)/2}\\
&= C  \big{\|}(1+|x|)^{-1} \big{(}(P_h- (1/2+i0))^{-1}) -(P_h- (1/2-i0))^{-1}) \big{)}  (1+|x|)^{-1}\big{\|}_{L^2}^{1/2} h^{(N+1)/2}\\
&\leq C h^{N/2},
\end{aligned}
\end{equation*}
by (\ref{resolvestim}). Here, we used the fact, whose proof can also be found in \cite[(2.26)]{Mel}, that $(P_h- (1/2+i0))^{-1}) -(P_h- (1/2-i0))^{-1})  = \frac{i}{2h} \mathcal{P}_{h,V} \mathcal{P}_{h,V}^*$.

All in all, we get that
$$\big{\|}\lim\limits_{|x|\rightarrow \infty} |x|^{-(d-1)/2} e^{-i |x|/h} E_h^1(|x|\hat{x})\big{\|}_{L^2} \leq C h^{N/2-1},$$
which proves the lemma.
\end{proof}

We have, by (\ref{passe}), that
\begin{equation*}
\begin{aligned}
&\lim\limits_{|x|\rightarrow \infty} |x|^{-(d-1)/2} e^{i |x|/h}\int_{-\infty}^{0} U_0(t) u^-_h e^{it/2h} \mathrm{d}t\\
&= \lim\limits_{|x|\rightarrow \infty} |x|^{-(d-1)/2} e^{i |x|/h}\int_{-\infty}^{t_-} U_0(t) u^0_h e^{i(t-t_-)/2h} \mathrm{d}t\\
&= e^{-it_-/2h} e^{i(d-1)\pi/4}  \frac{\pi^{1/2}}{(2\pi)^{d/2}} P_\Gamma\Big{(}{-\frac{\xi_0+\hat{x}}{\sqrt{h}}}\Big{)} e^{\frac{i}{h} x_0\cdot(\hat{x}+\xi_0)}  e^{-\frac{1}{2h} (\hat{x}+\xi_0) \cdot \overline{\Gamma}^{-1}(\hat{x}+\xi_0))}.
\end{aligned}
\end{equation*}

By (\ref{futur}) and (\ref{propagGauss2}), we have that
\begin{equation}
\begin{aligned}
&\Big{(}\lim\limits_{|x|\rightarrow \infty} |x|^{(d-1)/2} e^{-i|x|/h} \int_{0}^{+\infty} U_0(t) \tilde{u}^+_h e^{it/2h} \mathrm{d}t\Big{)}(\hat{x})\\
&= \frac{\pi^{1/2}}{(2\pi)^{d/2}}e^{i(1-d)\pi/4} e^{i\frac{\delta_+}{h}} \sum_{0\leq j\leq N} h^{j/2} (\pi_j)_{\Gamma_+} \Big{(}{\frac{\hat{x}-\xi_+}{\sqrt{h}}}\Big{)} e^{-\frac{i}{h} x_+\cdot (\hat{x}-\xi_+)}  e^{-\frac{1}{h} (\hat{x}-\xi_+) \cdot \Gamma_+^{-1}(\hat{x}-\xi_+)}.
\end{aligned}
\end{equation}

Finally, the last term composing $E_h^0$ can be dealt with using the following lemma.
\begin{lemme}
\begin{equation*}
\int_{0}^{t_+} U(t) u^-_h e^{it/2h} \mathrm{d}t = O(|x|^{-(d+1)/2}).
\end{equation*}
\end{lemme}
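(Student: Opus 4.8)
The plan is to show that the integrand $U(t)u_h^-$ is, for each fixed $h$ and uniformly for $t$ in the \emph{compact} interval $[0,t_+]$, a Schwartz function of $x$ whose decay at infinity is faster than any inverse power of $|x|$; integrating over the bounded interval then immediately yields the bound $O(|x|^{-(d+1)/2})$, and in fact $O(|x|^{-\infty})$. The underlying point is that a finite-time propagation by $P_h$ of a rapidly decaying state cannot create polynomial tails at infinity. First I would record that $u_h^-$, being an explicit Gaussian times a polynomial as in (\ref{GaussienGeneral6}), lies in the Schwartz space $\mathcal{S}(\R^d)$.

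Next I would establish that $U(t)=e^{-itP_h/h}$ maps $\mathcal{S}(\R^d)$ into itself, with all Schwartz seminorms of $U(t)u_h^-$ bounded uniformly for $t\in[0,t_+]$. Concretely, to control $\|\langle x\rangle^\alpha(-ih\partial_x)^\beta U(t)u_h^-\|_{L^2}$ one differentiates in $t$ and uses the commutators $[P_h,x_j]=-h^2\partial_{x_j}$ and $[P_h,-ih\partial_{x_j}]=ih(\partial_{x_j}V)$; since $V\in C^\infty_c(\R^d)$ the latter is a compactly supported bounded multiplication operator and the former lowers a derivative, so a Gronwall argument over the compact interval $[0,t_+]$ gives bounds on all these weighted norms, uniform in $t$. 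Sobolev embedding (taking momentum weights of order $>d/2$) then upgrades the weighted $L^2$ bounds to the pointwise estimate $|U(t)u_h^-(x)|\le C_\alpha\langle x\rangle^{-\alpha}$, valid for every $\alpha$ and uniformly in $t\in[0,t_+]$.

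A route more in keeping with the rest of the paper reads the same conclusion off the propagation result (\ref{propagGauss})--(\ref{fastdecay}): the explicit part $\tilde u_h(t,\cdot)$ is a finite sum of polynomials times a Gaussian centred at $x_t=\pi_x(\Phi^t(x_-,\xi_-))$, and as $t$ ranges over $[0,t_+]$ these centres stay in a fixed compact set, so $\tilde u_h(t,\cdot)$ decays faster than any power of $|x|$, uniformly in $t$. Applying (\ref{fastdecay}) to the spatial derivatives of $u_h^-$ — the Gaussian-times-polynomial class being stable under $\partial_x$, the scheme of Section \ref{Reminder} applies verbatim to each $\partial_x^\gamma u_h^-$ — gives control of $R_h^N(t,\cdot)$ in every weighted Sobolev norm, whence Sobolev embedding makes $R_h^N(t,\cdot)$ pointwise $O(\langle x\rangle^{-\alpha})$ for every $\alpha$, again uniformly on $[0,t_+]$.

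Either way, the final step is to integrate: since $\sup_{t\in[0,t_+]}|U(t)u_h^-(x)|\le C_\alpha\langle x\rangle^{-\alpha}$, we obtain $\left|\int_0^{t_+}U(t)u_h^-\,e^{it/2h}\,\mathrm{d}t\right|\le t_+\,C_\alpha\,\langle x\rangle^{-\alpha}$, and taking $\alpha=(d+1)/2$ gives the claim. The main obstacle is exactly this upgrade from $L^2$-type control to a pointwise bound at infinity that is uniform in $t$: the remainder estimate (\ref{fastdecay}) is stated only in weighted $L^2$, so one must either re-run the energy/commutator estimates for all derivatives, or invoke the stability of the propagation scheme under $\partial_x$, in order to feed Sobolev embedding. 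The continuity in $t$ of these bounds, ensuring a finite supremum over the compact interval $[0,t_+]$, is what legitimises the integration.
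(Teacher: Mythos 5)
Your proposal is correct, but it follows a genuinely different route from the paper, and one of your two suggested implementations has a flaw worth flagging. The paper's proof is much shorter and exploits the compact support of $V$ directly: it fixes $t$ and writes Duhamel's formula $U(t)u_h^- = U_0(t)u_h^- - \frac{i}{h}\int_0^t U_0(t-s)\,V\,U(s)u_h^-\,\mathrm{d}s$. Since $V\in C_c^\infty$ and $U(s)u_h^-$ is smooth, the function $V\,U(s)u_h^-$ is smooth and compactly supported, so its semiclassical Fourier transform is rapidly decaying together with all derivatives; multiplying by the free multiplier $e^{i(t-s)|\xi|^2/2h}$ (smooth, with polynomially bounded derivatives) preserves this, hence $U_0(t-s)V\,U(s)u_h^-$ decays faster than any polynomial, and the free term $U_0(t)u_h^-$ is explicit by (\ref{freeevo}). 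No energy estimates or Sobolev embedding are needed. Your primary route --- propagating all weighted seminorms $\|\langle x\rangle^\alpha(-ih\partial_x)^\beta U(t)u_h^-\|_{L^2}$ via the commutator identities $[P_h,x_j]=-h^2\partial_{x_j}$, $[P_h,-ih\partial_{x_j}]=ih(\partial_{x_j}V)$ and Gronwall, then invoking Sobolev embedding --- is a standard and correct proof that $U(t)$ preserves the Schwartz class with bounds locally uniform in $t$; it is heavier, but more robust (it uses only boundedness of the derivatives of $V$, not compact support), and it makes explicit the uniformity in $t\in[0,t_+]$ that the paper leaves implicit when it reduces to fixed $t$.

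Your alternative route via (\ref{propagGauss})--(\ref{fastdecay}), however, contains a genuine gap: applying the propagation theorem of Section \ref{Reminder} to $\partial_x^\gamma u_h^-$ controls $U(t)\partial_x^\gamma u_h^-$, not $\partial_x^\gamma\bigl(U(t)u_h^-\bigr)$ --- these differ, since $\partial_{x_j}$ does not commute with $U(t)$ (indeed $[P_h,\partial_{x_j}]=-(\partial_{x_j}V)\neq 0$). So this argument does not yield weighted Sobolev control of the remainder $R_h^N(t,\cdot)$, and (\ref{fastdecay}) as stated is a weighted-$L^2$ bound only, which is not enough to feed Sobolev embedding. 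Since your Gronwall route is self-contained and suffices, the proposal stands as a whole; but the second route should be dropped, or repaired along the lines of the paper's Duhamel argument.
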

\begin{proof}
It suffices to show that, for any $t\in \R$, we have $ U(t) u^-_h = O(|x|^{-(d+1)/2})$. By Duhamel's principle, we have 
$$U(t) u_h^- = U_0(t) u_h^- - \frac{i}{h}\int_0^t U_0(t-s) V U(s) u_h^- \mathrm{d}s.$$
We have $U_0(t-s) V U(s) u_h^- = \mathcal{F}_h^{-1}\Big{[}  e^{i (t-s)|\xi|^2/2h} \mathcal{F}_h \Big{(}V U(s) u_h^-\Big{)}\Big{]}$. Since $V U(s) u_h^-$ is a compactly supported function, $e^{i (t-s)|\xi|^2/2h} \mathcal{F}_h \Big{(}V U(s) u_h^-\Big{)}$ is smooth and $L^2$, so that its inverse Fourier transform decays faster than any polynomial, and hence $- \frac{i}{h}\int_0^t U_0(t-s) V U(s) u_h^- \mathrm{d}s$ decays faster than any polynomial . A similar argument shows that $U_0(t) u_h^- $ decays faster than any polynomial, and the result follows.
\end{proof}

All in all, we have obtained that
$$
E_h(x)=|x|^{-(d-1)/2}\big{(} e^{-i |x|/h} E_h^{in}(-\hat{x}) +
e^{i |x|/h} E_h^{out}(\hat{x}) \big{)} + O(|x|^{-(d+1)/2}),
$$
with $$E_{in}(\hat{x})= e^{-it_-/2h} e^{i(d-1)\pi/4} \frac{\pi^{1/2}}{(2\pi)^{d/2}} P_\Gamma\Big{(}{\frac{\hat{x}-\xi_0}{\sqrt{h}}}\Big{)} e^{-\frac{i}{h} x_0\cdot(\hat{x}-\xi_0)}  e^{-\frac{1}{2h} (\hat{x}-\xi_0) \cdot \overline{\Gamma}^{-1}(\hat{x}-\xi_0))}$$
and $$E_h^{out}(\hat{x})= e^{i(1-d)\pi/4}\frac{\pi^{1/2}}{(2\pi)^{d/2}} e^{i\frac{\delta_+}{h}} \sum_{0\leq j\leq N} h^{j/2} (\pi_j)_{\Gamma_+} \Big{(}{\frac{\hat{x}-\xi_+}{\sqrt{h}}}\Big{)} e^{-\frac{i}{h} x_+\cdot (\hat{x}-\xi_+)}  e^{-\frac{1}{h} (\hat{x}-\xi_+) \cdot \Gamma_+^{-1}(\hat{x}-\xi_+))} + O(h^{N/2-1}).$$

We obtain from this that
\begin{equation*}
\begin{aligned}
&S_h\Big{(}P_\Gamma\Big{(}{\frac{\hat{x}-\xi_0}{\sqrt{h}}}\Big{)} e^{\frac{i}{h} x_0\cdot(\hat{x}-\xi_0)}  e^{-\frac{1}{2h} (\hat{x}-\xi_0) \cdot \overline{\Gamma}^{-1}(\hat{x}-\xi_0))}\Big{)} \\
&= e^{i t_-/2h} e^{i\frac{\delta_+}{h}} \sum_{0\leq j\leq N} h^{j/2} (\pi_j)_{\Gamma_+} \Big{(}{\frac{\hat{x}-\xi_+}{\sqrt{h}}}\Big{)} e^{\frac{i}{h} x_+\cdot(\hat{x}-\xi_+)}  e^{-\frac{1}{h} (\hat{x}-\xi_+) \cdot \Gamma_+^{-1}(\hat{x}-\xi_+))} + O(h^{(N-2+d)/2}).
\end{aligned}
\end{equation*}

Hence, by possibly replacing $N$ by $N+1$ if $d=2$ (which we can do, since $N$ was chosen arbitrary),  we obtain that
\begin{equation*}
\begin{aligned}
S_h\phi_{x_0,\xi_0,\overline{\Gamma}^{-1},P_\Gamma}
= e^{i\frac{\delta_1}{h}}\phi_{x_+,\xi_+,\Gamma_+,Q^N} + R_N,
\end{aligned}
\end{equation*}
where $Q^N = \sum_{0\leq j\leq N} h^{j/2} (\pi_j)_{\Gamma_+}$, $\delta_1 = \frac{t_-}{2}+ \delta_+ + x_0\cdot \xi_0- x_+\cdot\xi_+$, and  $\|R_N\|_{C^0}= O(h^{(N+1)/2})$.

Using the fact that $\Gamma\mapsto \overline{\Gamma}^{-1}$ and $P\mapsto P_\Gamma$ are one-to-one, this gives us the statement of the theorem.

\begin{remarque}
If the polynomial $P$ is constant equal to $1$, one could write an explicit formula for the constant $Q_1^0$ in terms of the classical dynamics by using (\ref{imageun}) and (\ref{enzero}).
\end{remarque}
\section{Resolution of identity}\label{sectionreso}

The aim of this section is to prove (\ref{resolution2}).
\begin{lemme}\label{resolution3}
Let $f\in L^2(\Sp^{d-1})$. We have
\begin{equation}\label{resolution}f(\omega) = c_h \int_{\Sp^{d-1}}\mathrm{d}\xi \int_{\xi^\perp} \mathrm{d}x  \tilde{\phi}_{x,\xi}(\omega) \int_{\Sp^{d-1}} \mathrm{d}\omega'  \overline{ \tilde{\phi}_{x,\xi}(\omega')} f(\omega'),
\end{equation}
where $c_h$ is a parameter depending on $h$, with $c_h\sim_{h\rightarrow 0}  2^{(d-1)/2}(2\pi h)^{-3(d-1)/2}$.
\end{lemme}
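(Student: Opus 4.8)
The right-hand side of \eqref{resolution} defines a linear operator
$$T_h f(\omega):= c_h\int_{\Sp^{d-1}}\mathrm d\xi\int_{\xi^\perp}\mathrm dx\;\tilde\phi_{x,\xi}(\omega)\int_{\Sp^{d-1}}\mathrm d\omega'\,\overline{\tilde\phi_{x,\xi}(\omega')}f(\omega'),$$
and the claim is that $T_h=\mathrm{Id}$ for the stated $c_h$. The plan is to compute the kernel of $T_h$ explicitly, show that it concentrates on the diagonal, and match constants. Two structural facts organize the computation. First, with $\tilde\phi_{x,\xi}(\omega)=e^{\frac ih x\cdot\omega}e^{-\frac1{2h}|\omega-\xi|^2}$ one checks $\tilde\phi_{Rx,R\xi}(R\omega)=\tilde\phi_{x,\xi}(\omega)$ for $R\in SO(d)$, and the measure $\mathrm d\xi\,\mathrm dx$ on $\{(\xi,x):\xi\in\Sp^{d-1},\,x\in\xi^\perp\}\cong T^*\Sp^{d-1}$ is rotation invariant; hence $T_h$ commutes with the $SO(d)$-action on $L^2(\Sp^{d-1})$. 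In particular $T_h$ is a scalar on each space of spherical harmonics, the normalizing constant is forced to be $\omega$-independent, and it suffices to evaluate $T_hf$ at a single point.

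First I would carry out the integral in $x\in\xi^\perp$. Since $x\cdot(\omega-\omega')=x\cdot\Pi_{\xi^\perp}(\omega-\omega')$ for $x\perp\xi$, one gets $\int_{\xi^\perp}e^{\frac ih x\cdot(\omega-\omega')}\mathrm dx=(2\pi h)^{d-1}\delta_{\xi^\perp}\big(\Pi_{\xi^\perp}(\omega-\omega')\big)$, where $\Pi_{\xi^\perp}$ is orthogonal projection onto $\xi^\perp$. The $\delta$ restricts $\omega'$ to the (generically two) points of $\Sp^{d-1}$ over $\Pi_{\xi^\perp}\omega$, namely $\omega$ itself and its reflection $\sigma_\xi\omega:=\omega-2(\omega\cdot\xi)\xi$, each weighted by the coarea Jacobian $|\omega\cdot\xi|^{-1}$. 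Using $|\omega-\xi|^2=2-2\,\omega\cdot\xi$ and $|\sigma_\xi\omega-\xi|^2=2+2\,\omega\cdot\xi$, the Gaussian weight of the reflected point is $e^{-\frac1{2h}(|\omega-\xi|^2+|\sigma_\xi\omega-\xi|^2)}=e^{-2/h}$, so
$$T_hf(\omega)=c_h(2\pi h)^{d-1}\left[f(\omega)\int_{\Sp^{d-1}}\frac{e^{-\frac1h|\omega-\xi|^2}}{|\omega\cdot\xi|}\,\mathrm d\xi\;+\;e^{-2/h}\int_{\Sp^{d-1}}\frac{f(\sigma_\xi\omega)}{|\omega\cdot\xi|}\,\mathrm d\xi\right].$$

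The main term is the first one. Localizing near $\xi=\omega$ through the graph chart $\xi=\sqrt{1-|u|^2}\,\omega+u$ with $u\in\omega^\perp$ gives $|\omega-\xi|^2=|u|^2+O(|u|^4)$, $|\omega\cdot\xi|=1+O(|u|^2)$ and $\mathrm d\xi=(1+O(|u|^2))\,\mathrm du$, whence $\int_{\Sp^{d-1}}\frac{e^{-\frac1h|\omega-\xi|^2}}{|\omega\cdot\xi|}\,\mathrm d\xi=\int_{\omega^\perp}e^{-|u|^2/h}(1+O(|u|^2))\,\mathrm du=(\pi h)^{(d-1)/2}(1+O(h))$. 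Thus the diagonal term is $c_h(2\pi h)^{d-1}(\pi h)^{(d-1)/2}(1+O(h))f(\omega)$, and requiring the leading coefficient to equal $1$ fixes $c_h\sim(2\pi h)^{-(d-1)}(\pi h)^{-(d-1)/2}=2^{(d-1)/2}(2\pi h)^{-3(d-1)/2}$, exactly the claimed asymptotic. This is reassuring: it is precisely the normalization of the flat wave-packet (FBI) resolution of identity on $\R^{d-1}$, and indeed the computation just performed amounts to saying that, in normal coordinates at $\omega$, the transform $f\mapsto\langle\tilde\phi_{x,\xi},f\rangle$ reduces to the flat one, for which the resolution of identity is exact.

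The delicate point — and what I expect to be the main obstacle — is the equator $\omega\cdot\xi=0$. There the two-point fiber over $\Pi_{\xi^\perp}\omega$ degenerates ($\sigma_\xi\omega\to\omega$), the coarea Jacobian $|\omega\cdot\xi|^{-1}$ blows up, and both $\xi$-integrals above develop a logarithmic singularity (weighted by $e^{-2/h}$). This is exactly the convergence problem mentioned after \eqref{resolution2}, and the reason the cut-off $\chi$ is built into $\phi_{x,\xi}$: it confines $\omega$ to $|\omega-\xi|\lesssim h^{1/3}$, excising the equator, making the $x$-integral absolutely convergent and rendering the reflected and equatorial contributions genuinely $O(h^\infty)$. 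The remaining work is therefore twofold: first, to replace $\tilde\phi$ by the cut-off $\phi$ (they differ by $O(h^\infty)$ in $C^0$) so that Fubini and the above manipulations are legitimate, thereby proving the convergent statement \eqref{resolution2}; and second, to upgrade the leading-order identity to the exact one claimed. For the latter I would again invoke $SO(d)$-invariance: $T_h$ acts as a multiplier on each space of spherical harmonics, the stationary-phase expansion above determines this multiplier to every order in $h$, and the exponentially small off-diagonal and curvature contributions are controlled by the Gaussian localization. Establishing that this multiplier is \emph{exactly} $1$ (and hence pinning down $c_h$ beyond its leading asymptotic), rather than merely $1+O(h^\infty)$, is the crux I would expect to require the most care.
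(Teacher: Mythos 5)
Your computational core agrees with the paper's: both proofs integrate out $x\in\xi^\perp$ to produce a delta function in the chart on $\xi^\perp$, reduce to the diagonal preimage $\omega'=\omega$, and evaluate a Gaussian integral to fix $c_h$; your leading constant $2^{-(d-1)}(\pi h)^{-3(d-1)/2}=2^{(d-1)/2}(2\pi h)^{-3(d-1)/2}$ is correct. But there is a genuine gap, located exactly where you say the crux is: proving the \emph{exact} identity. The route you propose --- $SO(d)$-invariance makes $T_h$ a multiplier on each space of spherical harmonics, and stationary phase ``determines this multiplier to every order in $h$'' --- cannot close this gap, because an asymptotic expansion in $h$ pins down the multiplier only modulo $O(h^\infty)$, never exactly. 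Worse, for the literal cut-off-free states $\tilde{\phi}_{x,\xi}$ the exact statement is not just hard but unavailable: the $\xi$-integrals you wrote diverge logarithmically at the equator (as you note), and even after regularizing, your own reflected term $e^{-2/h}\int_{\Sp^{d-1}} f(\sigma_\xi\omega)|\omega\cdot\xi|^{-1}\mathrm{d}\xi$ is a nonzero rotation-equivariant operator whose spherical-harmonic multiplier genuinely depends on the degree $\ell$ (in $d=2$ it is, up to sign, a Fourier coefficient of the weight, hence decays in $\ell$), so the eigenvalues of $T_h$ are not all equal and no scalar $c_h$ can make $c_hT_h=\mathrm{Id}$ hold exactly.

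What you are missing is that the lemma --- despite the notation $\tilde{\phi}_{x,\xi}$, which is a slip in the paper --- concerns the cut-off states $\phi_{x,\xi}$ of (\ref{resolution2}): the paper's proof immediately replaces the profile by $\tilde{\chi}_h(t)=e^{-t^2/(2h)}\chi(t/h^{1/3})$, supported in $[0,\tfrac34 h^{1/3})$. The cut-off does much more than render the off-diagonal contributions ``genuinely $O(h^\infty)$'', as you put it: it kills them \emph{identically}. On the support of $\chi(|\omega-\xi|/h^{1/3})\,\chi(|\omega'-\xi|/h^{1/3})$, both $\omega$ and $\omega'$ lie in a cap of radius $O(h^{1/3})$ around $\xi$ on which the projection to $\xi^\perp$ is injective, while the reflected preimage satisfies $|\sigma_\xi\omega-\xi|=\sqrt{2+2\,\omega\cdot\xi}\geq\sqrt{2}$ and the equator is excised outright. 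Hence after the $x$-integration only $\omega'=\omega$ survives, and one obtains the exact identity $g(\omega)=c_h^{-1}f(\omega)$ with the explicit constant $c_h^{-1}=(2\pi h)^{d-1}\int_{\Sp^{d-1}} J(|\omega-\xi|)\,\tilde{\chi}_h^2(|\omega-\xi|)\,\mathrm{d}\xi$, where $J$ is the Jacobian of the chart (equal to $1+O(h^{2/3})$ on the support); this is independent of $\omega$ by rotation invariance of the integrand. Exactness is therefore automatic --- no spherical-harmonic decomposition, no upgrading of an asymptotic identity to an exact one --- and since the lemma only asserts the \emph{asymptotics} of $c_h$, not a closed form, all that remains is the rescaling $y=\sqrt{h}\,z$ in this single integral, which is essentially the computation you already carried out for your diagonal term.
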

\begin{proof}
Let us write $g(\omega):= \int_{\Sp^{d-1}}\mathrm{d}\xi \int_{\xi^\perp} \mathrm{d}x \tilde{\phi}_{x,\xi}(\omega) \int_{\Sp^{d-1}} \mathrm{d}\omega'  \overline{ \tilde{\phi}_{x,\xi}(\omega')} f(\omega') $.

Let us write for all $t\geq 0$ $\tilde{\chi}_h(t) := e^{-\frac{t^2}{2h}} \chi \Big{(} \frac{t}{h^{1/3}}\Big{)}$, so that $\tilde{\chi}_h$ has support in $[0,\frac{3}{4}h^{1/3})$.

We have 
$$g(\omega)= \int_{\Sp^{d-1}}\mathrm{d}\xi  \int_{\Sp^{d-1}} \mathrm{d}\omega' \tilde{\chi}(|\omega-\xi|) \tilde{\chi}(|\omega'-\xi|)  f(\omega') \int_{\xi^\perp} \mathrm{d}x e^{\frac{i}{h} (-\omega+\omega')\cdot x}.$$

On $\{\omega \in \Sp^{d-1}; |\omega-\xi|\leq 3/4\}$, we may define a local chart by projecting on $\xi^\perp$. This way, we have $\omega=\omega(y)$ with $y\in \xi^\perp$, so that $|y|= |\sin (\widehat{\omega,\xi})|$, where $(\widehat{\omega,\xi})$ denotes the angle between the vectors $\omega$ and $\xi$. We also have $\Big{|} \det \Big{(}\frac{\mathrm{d}\omega}{dy}\Big{)}\Big{|} = |\cos (\widehat{\omega,\xi})|^{d-1}$.

Noting that if $x\in \xi^\perp$, we have $\omega(y)\cdot x= y\cdot x$, we get
\[\begin{aligned}
g(\omega)&=\int_{\Sp^{d-1}}\mathrm{d}\xi  \int_{\xi^\perp} |\cos (\widehat{\omega(y'),\xi})|^{d-1} \mathrm{d}y' \tilde{\chi}(|\omega-\xi|) \tilde{\chi}(|\omega(y')-\xi|)  f(\omega(y')) \\
&\int_{\xi^\perp} \mathrm{d}x e^{\frac{i}{h} (-y_{\omega}+y')\cdot x}\\
&= (2\pi h)^{d-1} f(\omega) \int_{\Sp^{d-1}}\mathrm{d}\xi  |\cos (\widehat{\omega,\xi})|^{d-1}  \tilde{\chi}^2(|\omega-\xi|).
\end{aligned}
\]
We write $c_h ^{-1}:= (2\pi h)^{d-1} \int_{\Sp^{d-1}}\mathrm{d}\xi  |\cos (\widehat{\omega,\xi})|^{d-1}  \tilde{\chi}^2(|\omega-\xi|)$, which is independent of $\omega$, since the integrand depends only on $|\omega-\xi|$.

Let us write $r(y)= |\omega(y)-\xi|$. We have $r(y) = y +o(y)$.

We have
\[\begin{aligned}
c_h^{-1} &=  (2\pi h)^{d-1} \int_{\xi^\perp} |\cos (\widehat{\omega(y),\xi})|^{d-1} e^{-\frac{|\omega(y)-\xi|^2}{h}} \Big{|}\chi\Big{(}\frac{|\omega(y)-\xi|}{h^{1/3}}\Big{)}\Big{|}^2 \mathrm{d}y\\
&=  (2\pi h)^{d-1} \int_{\xi^\perp} |\cos (\widehat{\omega(h^{1/2}z),\xi})|^{d-1} e^{-\frac{r(h^{1/2}z)^2}{h}} \Big{|}\chi\Big{(}\frac{r(h^{1/2}z)}{h^{1/3}}\Big{)}\Big{|}^2 h^{(d-1)/2}\mathrm{d}z\\
&= (2\pi)^{d-1} h^{3(d-1)/2} \int_{\xi^\perp} e^{-z^2} \mathrm{d}z + o(h^{(d-1)/2})\\
&= 2^{-(d-1)/2}(2\pi h)^{3(d-1)/2} + o(h^{(d-1)/2}).
\end{aligned}\]
\end{proof}

As a corollary of the resolution of identity formula, let us state the following result, which can be proved along the same line as in \cite[1.2.3]{combescure2012coherent}
\begin{corolaire}
Let $A$ be a trace-class operator acting on $L^2(\Sp^{d-1})$. We then have
$$\mathrm{Tr} A = c_h \int_{\Sp^{d-1}} \mathrm{d}\omega \int_{\omega^\perp} \mathrm{d}\xi \langle\tilde{\phi}_{\omega,\xi}, A \tilde{\phi}_{\omega,\xi}\rangle_{L^2(\Sp^{d-1})}.$$
\end{corolaire}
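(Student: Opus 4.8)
The plan is to read Lemma~\ref{resolution3} as the operator identity $\mathrm{Id} = c_h \int_{\Sp^{d-1}} \mathrm{d}\omega \int_{\omega^\perp} \mathrm{d}\xi\, |\tilde\phi_{\omega,\xi}\rangle\langle\tilde\phi_{\omega,\xi}|$, understood in the weak sense on $L^2(\Sp^{d-1})$ (after relabelling the integration variables of Lemma~\ref{resolution3}, whose sphere variable is here called $\omega$ and whose perpendicular variable is here called $\xi$). Polarising, for any $f,g\in L^2(\Sp^{d-1})$ one has $\langle g,f\rangle = c_h \int_{\Sp^{d-1}} \mathrm{d}\omega \int_{\omega^\perp} \mathrm{d}\xi\, \langle g,\tilde\phi_{\omega,\xi}\rangle\,\langle\tilde\phi_{\omega,\xi},f\rangle$. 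This is the only input from the geometry of the Gaussian states that I will use; everything else is abstract operator theory.

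Next I would use the singular value decomposition of the trace-class operator $A$: write $A = \sum_{n} \lambda_n \langle v_n,\cdot\rangle u_n$ with $\lambda_n\geq 0$, $\sum_n \lambda_n = \|A\|_{\mathrm{Tr}} < \infty$, and $(u_n)_n$, $(v_n)_n$ orthonormal families. Summing $\langle e_k,Ae_k\rangle$ over an orthonormal basis $(e_k)_k$ and using completeness gives $\Tr A = \sum_n \lambda_n \langle v_n,u_n\rangle$. On the other hand, for fixed $(\omega,\xi)$ one has $\langle\tilde\phi_{\omega,\xi},A\tilde\phi_{\omega,\xi}\rangle = \sum_n \lambda_n \langle\tilde\phi_{\omega,\xi},u_n\rangle\langle v_n,\tilde\phi_{\omega,\xi}\rangle$, and applying the polarised resolution of identity above with $g=v_n$, $f=u_n$ shows that, \emph{for each fixed $n$}, the integral of the $n$-th term equals $\lambda_n\langle v_n,u_n\rangle$. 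If one may interchange $\sum_n$ and $c_h\int\int$, the corollary follows at once.

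The main obstacle is precisely this interchange: it cannot be justified by controlling $\int\int \big\||\tilde\phi_{\omega,\xi}\rangle\langle\tilde\phi_{\omega,\xi}|\big\|_{\mathrm{Tr}}$, since $\|\tilde\phi_{\omega,\xi}\|^2$ is independent of the position variable and its integral over the unbounded space $\omega^\perp$ diverges --- the oscillation of $\tilde\phi_{\omega,\xi}$ in that variable is essential and is destroyed by taking absolute values. Instead I would dominate term by term. By Cauchy--Schwarz, $c_h\int\int |\langle\tilde\phi_{\omega,\xi},u_n\rangle\langle v_n,\tilde\phi_{\omega,\xi}\rangle| \leq \big(c_h\int\int|\langle\tilde\phi_{\omega,\xi},u_n\rangle|^2\big)^{1/2}\big(c_h\int\int|\langle\tilde\phi_{\omega,\xi},v_n\rangle|^2\big)^{1/2}$, and the polarised resolution of identity applied with $g=f=u_n$ (resp. $v_n$) gives $c_h\int\int|\langle\tilde\phi_{\omega,\xi},u_n\rangle|^2 = \|u_n\|^2 = 1$, uniformly in $n$. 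Hence the double series--integral is dominated by $\sum_n \lambda_n\cdot 1 = \|A\|_{\mathrm{Tr}}<\infty$, and Fubini together with dominated convergence legitimises exchanging $\sum_n$ and $c_h\int\int$. Summing the per-term identities then yields $c_h\int\int\langle\tilde\phi_{\omega,\xi},A\tilde\phi_{\omega,\xi}\rangle = \sum_n\lambda_n\langle v_n,u_n\rangle = \Tr A$, which is the claim.
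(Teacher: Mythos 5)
Your proposal is correct and takes essentially the same route as the paper: the paper's ``proof'' of this corollary is simply a pointer to \cite[1.2.3]{combescure2012coherent}, where the coherent-state trace formula is obtained exactly as you do it, by combining the singular value decomposition of the trace-class operator with the polarised resolution of identity and a Cauchy--Schwarz bound (here $c_h\int_{\Sp^{d-1}}\mathrm{d}\omega\int_{\omega^\perp}\mathrm{d}\xi\,|\langle\tilde\phi_{\omega,\xi},u_n\rangle|^2=\|u_n\|^2=1$) to justify interchanging the sum over singular values with the phase-space integral. Your remarks on the relabelling of the variables and on why a trace-norm domination of $|\tilde\phi_{\omega,\xi}\rangle\langle\tilde\phi_{\omega,\xi}|$ would fail are consistent with, and slightly more careful than, what the cited argument records.
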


\bibliographystyle{alpha}
\bibliography{references}
\end{document}